\newtheorem{thm}{Theorem}[section]
\newtheorem{prop}[thm]{Proposition}
\newtheorem*{ack}{Acknowledgements}
\newtheorem{theo}{Theorem}
\theoremstyle{definition}
\newtheorem{example}[thm]{Example}
\newtheorem{remark}[thm]{Remark}
\theoremstyle{remark}
\newtheorem*{rem}{Remark}
\numberwithin{equation}{section}
\newcounter{first}
\newenvironment{probs}
{%
 \begin{list}
       {\textbf\thefirst.}
       {\usecounter{first} \setlength{\leftmargin}{5pt}
        \setlength{\topsep}{5pt}
        \setlength{\itemsep}{5pt}
       }
}%
{\end{list}}
\newcommand{\A}{{\mathcal A}}
\newcommand{\RR}{{\mathcal R}}
\newcommand{\DD}{{\mathcal D}}
\newcommand{\BB}{{\mathcal B}}
\renewcommand{\aa}{{\mathfrak a}}
\newcommand{\B}{{\mathfrak B}}
\newcommand{\m}{{\mathfrak m}}
\newcommand{\N}{{\mathbb N}}
\newcommand{\Z}{{\mathbb Z}}
\newcommand{\C}{{\mathbb C}}
\renewcommand{\P}{{\mathbb P}}
\renewcommand{\k}{\Bbbk}
\newcommand{\PS}{{P\!\varSigma}}
\newcommand{\twoheadlongrightarrow}{\relbar\joinrel\twoheadrightarrow}
\DeclareMathOperator{\Hilb}{Hilb}
\DeclareMathOperator{\rank}{rank}
\DeclareMathOperator{\gr}{gr}
\DeclareMathOperator{\im}{im}
\DeclareMathOperator{\coker}{coker}
\DeclareMathOperator{\codim}{codim}
\DeclareMathOperator{\spn}{span}
\DeclareMathOperator{\ann}{ann}
\DeclareMathOperator{\lin}{lin}
\DeclareMathOperator{\id}{id}
\DeclareMathOperator{\In}{in}
\begin{document}

\title[Chen ranks and resonance]%
{Chen ranks and resonance}

\author[Daniel C. ~Cohen]{Daniel C. ~Cohen}
\thanks{Cohen supported by NSF 1105439, NSA H98230-11-1-0142}
\address{Department of Mathematics,
Louisiana State University,
Baton Rouge, LA 70803}
\email{\href{mailto:cohen@math.lsu.edu}{cohen@math.lsu.edu}}
\urladdr{\href{http://www.math.lsu.edu/~cohen/}%
{http://www.math.lsu.edu/\~{}cohen}}

\author[Henry K. Schenck]{Henry K. Schenck}
\thanks{Schenck supported by NSF 1068754, NSA H98230-11-1-0170}
\address{Department of Mathematics,
University of Illinois Urbana-Champaign, Urbana, IL 61801}
\email{\href{mailto:schenck@math.uiuc.edu}{schenck@math.uiuc.edu}}
\urladdr{\href{http://www.math.uiuc.edu/~schenck/}%
{http://www.math.uiuc.edu/\~{}schenck}}
\curraddr{Department of Mathematics \& Statistics, Auburn University, Auburn, AL 36849}
\email{\href{mailto:hks0015@auburn.edu}{hks0015@auburn.edu}}
\urladdr{\href{http://webhome.auburn.edu/~hks0015/}%
{http://webhome.auburn.edu/\~{}hks0015/}}

\subjclass[2010]{Primary
20F14,  %% Derived series, central series, an generalizations
14M12;  %%	Determinantal varieties
Secondary
20F36, %%Braid groups, Artin groups
52C35. %% Arrangements of points, flats, hyperplanes
}

\keywords{Chen group, resonance variety, hyperplane arrangement}

\begin{abstract}
The Chen groups of a group $G$ are the lower central series quotients of the maximal metabelian quotient of $G$.  Under certain conditions, we relate the ranks of the Chen groups to the first resonance variety of $G$, a jump locus for the cohomology of $G$.  In the case where $G$ is the fundamental group of the complement of a complex hyperplane arrangement, our results positively resolve Suciu's Chen ranks conjecture.  We obtain explicit formulas for the Chen ranks of a number of groups of broad interest, including pure Artin groups 
associated to Coxeter groups, 
and the group of basis-conjugating automorphisms of a finitely generated free group.
\end{abstract}
\maketitle

\section{Introduction}
\label{intro}
Let $G$ be a group, with commutator subgroup $G'=[G,G]$, and second commutator subgroup $G''=[G',G']$.  The Chen groups of $G$ are the lower central series quotients $\gr_k (G/G'')$ of $G/G''$.  
These groups were introduced by K.T.~Chen in \cite{Ch51}, so as to provide accessible approximations of the lower central series quotients of a link group.  
For example, if $G=F_n$ is the free group of rank $n$ (the fundamental group of the $n$-component unlink), the Chen groups are free abelian, and their ranks, $\theta_k(G)=\rank \gr_k (G/G'')$, are given by
\begin{equation}
\label{eq:chenfn}
\theta_k(F_n)=(k-1)\cdot\binom{k+n-2}{k} , \quad \text{for $k\ge 2$}.
\end{equation}

While apparently weaker invariants than the lower central series quotients of $G$ itself, the Chen groups sometimes yield more subtle information.  For instance, if $G=P_n$ is the Artin pure braid group, the ranks of the Chen groups distinguish $G$ from a direct product of free groups, while the ranks of the lower central series quotients fail to do so, see \cite{CS1}. In this paper, we study the Chen ranks $\theta_k(G)$ for a class of groups which includes 
all arrangement groups (fundamental groups of complements of complex hyperplane arrangements), and potentially fundamental groups of more general smooth quasi-projective varieties.  We relate these Chen ranks to the first resonance variety of the cohomology ring of $G$.  For arrangement groups, our results positively resolve Suciu's  Chen ranks conjecture, stated in \cite{Su}.

Let $A=\bigoplus_{k=0}^\ell A^k$ be a finite-dimensional, graded, graded-commutative, connected 
algebra over an algebraically closed field $\k$ of 
characteristic $0$.   
For each $a\in A^1$, we have $a^2=0$, so right-multiplication by $a$ defines a cochain complex
\begin{equation} \label{eq:aomoto}
(A,a)\colon \quad
\xymatrix{
0 \ar[r] &A^0 \ar[r]^{a} & A^1
\ar[r]^{a}  & A^2 \ar[r]^{a}& \cdots \ar[r]^{a}
& A^{k}\ar[r]^{a} & \cdots}.
\end{equation}
In the context of arrangements, with $A$ the cohomology ring of the complement, the complex $(A,a)$ was introduced by Aomoto~\cite{Ao}, 
and subsequently
used by Esnault-Schechtman-Viehweg \cite{ESV} and Schechtman-Terao-Varchenko \cite{STV}
in the study of local system cohomology. In this context, if $a \in A^1$ is generic, the cohomology of $(A,a)$ vanishes, except possibly in the top dimension, see Yuzvinsky \cite{Yuz}.

In general, the resonance varieties of $A$, or of $G$ in the case where $A=H^*(G;\k)$, are the cohomology jump loci of the complex $(A,a)$,
\[
\RR^k_d(A) = \{a \in A^1 \mid \dim H^k(A,a) \ge d\},
\]
homogeneous algebraic subvarieties of $A^1$.  
These varieties, introduced by Falk \cite{Fa} in the context of arrangements, are isomorphism-type invariants of the algebra $A$.  
They have 
been the subject of considerable recent interest  
in a variety of areas, 
see, for instance, Dimca-Papadima-Suciu \cite{DPS}, Yuzvinsky \cite{Yuz1}, and references therein. 
We will focus on the first resonance variety of the group $G$, 
\[
\RR^1(G)=\RR^1(H^*(G;\k))=\{a\in  A^1 \mid H^1(A,a)\neq 0\}. 
\]

Assume that $G$ is finitely presented.  
The group $G$ is 
$1$-formal (over the field $\k$) if and only if the Malcev Lie algebra of $G$ is is isomorphic, as a filtered Lie algebra, to the completion with respect to degree of a quadratic Lie algebra (see \cite{PS3} for details), and is said to be a commutator-relators group if it admits a presentation $G=F/R$, where $F$ is a finitely generated free group and $R$ is the normal closure of a finite subset of $[F,F]$.  For any finitely generated $1$-formal group, Dimca-Papadima-Suciu \cite{DPS} show that all irreducible components of the resonance variety $\RR^1_d(H^*(G;\k))$ are linear subspaces of $H^1(G;\k)$.  For a finitely presented,  commutator-relators group, the resonance variety $\mathcal R^1(G)$ may be realized as the variety defined by the annihilator of the linearized Alexander invariant $\B$ of $G$, a module over the polynomial ring $S=\text{Sym}(H_1(G;\k))$, $\mathcal R^1(G)=V(\text{ann}(\B))$, see Section \ref{sec:setup} below. We can thus view $\mathcal R^1(G)$ as a scheme.

Let $A=H^*(G;\k)$, and let $\mu\colon A^1 \wedge A^1 \to A^2$ be the cup product map, $\mu(a\wedge b)=a\cup b$.  
A non-zero subspace $U \subseteq A^1$ is said to be $p$-isotropic with respect to the cup product map if the restriction of $\mu$ to $U\wedge U$ has rank $p$.  For instance, $A^1$ is $0$-isotropic if $G$ is a free group, while $A^1$ is $1$-isotropic if $G$ is the fundamental group of a closed, orientable surface.   
Call subspaces $U$ and $V$ of $A^1$ projectively disjoint if they meet only at the origin, $U\cap V=\{0\}$. 
In the formulas below, we use the convention $\binom{n}{m}=0$ if $n<m$. 
Our main result is as follows.

\begin{theo} \label{thm:main}
Let $G$ be a finitely presented, $1$-formal, commutator-relators group.  Assume that the components of $\mathcal R^1(G)$ are (i) $0$-isotropic, (ii) projectively disjoint, and (iii) reduced (viewing $\mathcal R^1(G)$ as a scheme).  Then, for $k\gg 0$, 
\[
\theta_k(G)= (k-1) \sum_{m\ge 2} h_m  \binom{m+k-2}{k},
\]
where $h_m$ denotes the number of $m$-dimensional components of $\RR^1(G)$.
\end{theo}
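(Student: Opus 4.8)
The plan is to translate everything into a statement about the Hilbert function, in large degrees, of the linearized Alexander invariant $\B$, and then to read that Hilbert function off the scheme structure of $\RR^1(G)$. By \eqref{eq:chenfn}, the formula asserted in the theorem is equivalent to
\[
\theta_k(G)=\sum_{j=1}^s\theta_k(F_{m_j})\qquad\text{for }k\gg 0,
\]
where $C_1,\dots,C_s$ are the irreducible components of $\RR^1(G)$ and $m_j=\dim C_j$. For a finitely presented, $1$-formal, commutator-relators group one has $\theta_{k+2}(G)=\dim_\k\B_k$ for all $k\ge 0$ — this is where $1$-formality enters, identifying the associated graded of the genuine Alexander invariant with its linearization (see Section~\ref{sec:setup}) — so it suffices to prove $\dim_\k\B_k=\sum_j\dim_\k(\B_{F_{m_j}})_k$ for $k\gg 0$, where $\B_{F_m}$ is the linearized Alexander invariant of the free group $F_m$. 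I would also recall at this point the presentation of $\B$ as the cokernel of a Koszul-type map twisted by the cup product $\mu\colon\wedge^2A^1\to A^2$, and that $V(\ann\B)=\RR^1(G)$; by hypothesis $\ann\B$ is radical, and by the Dimca–Papadima–Suciu structure theorem each $C_j$ is linear, so $\ann\B=I_1\cap\cdots\cap I_s$ with $I_j=I(C_j)$ generated by linear forms.

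Next I would carry out a \emph{globalization} step, using projective disjointness and reducedness. Consider the natural degree-preserving map $\B\to\bigoplus_{j=1}^s\B/I_j\B$. Reducedness gives, after localizing at a generic point of $C_j$, that $I_j$ annihilates $\B$ there, so $\supp(I_j\B)\subseteq\bigcup_{i\ne j}C_i$ and, dually, $\supp(\B/I_j\B)\subseteq\bigl(\bigcup_iC_i\bigr)\cap C_j=C_j$, the last equality by projective disjointness. Hence the kernel $\bigcap_jI_j\B$ is supported on $\bigcap_j\bigcup_{i\ne j}C_i=\{0\}$ (a nonzero common point would lie in two components) and so has finite length; and since $I_j+\prod_{i\ne j}I_i$ contains a power of the irrelevant ideal $\m$, a standard Chinese-remainder argument shows the map is surjective in all large degrees. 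Therefore $\dim_\k\B_k=\sum_{j=1}^s\dim_\k(\B/I_j\B)_k$ for $k\gg 0$, and it remains only to identify each summand.

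The final, and hardest, step is the \emph{local identification} $\dim_\k(\B/I_j\B)_k=\dim_\k(\B_{F_{m_j}})_k$ for $k\gg 0$. The module $\B/I_j\B$ is supported on the $m_j$-dimensional linear space $C_j$; the point is that, because $C_j$ is $0$-isotropic, the cup product vanishes on $\wedge^2$ of the relevant subspace, so the twisted Koszul presentation of $\B$, reduced modulo $I_j$, agrees up to finite length with the ordinary Koszul presentation defining $\B_{F_{m_j}}$, while reducedness forces the multiplicity of $\B$ along $C_j$ to be exactly $1$ (so that $\B/I_j\B$ is not a proper power or several copies of the free-group invariant). Together these show $\B/I_j\B$ and $\B_{F_{m_j}}$ have equal Hilbert polynomials. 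Combining with the previous step and \eqref{eq:chenfn} gives the formula. I expect this last step to be the main obstacle: pinning down the local structure of $\B$ along each component — making precise that $0$-isotropy makes it free-group-like, that reducedness makes the multiplicity $1$, and that embedded primes contribute nothing to the Hilbert polynomial — is the delicate part, whereas the reduction to $\B$ and the Chinese-remainder globalization are comparatively formal. (One could alternatively split the conclusion into the inequality $\theta_k(G)\ge\sum_j\theta_k(F_{m_j})$, which follows for all $k$ from surjections of $G$ onto free-group-like quotients realizing the components together with monotonicity of Chen ranks, and the reverse inequality in large degrees, which is exactly the content of the $\B$-computation above.)
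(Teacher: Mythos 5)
Your overall architecture matches the paper's: reduce to the Hilbert polynomial of the linearized Alexander invariant $\B$ via Papadima--Suciu, split $\B$ over the (linear, by Dimca--Papadima--Suciu) components of $\RR^1(G)$ up to finite-length error using projective disjointness, and identify each local piece with the linearized Alexander invariant of a free group. The reduction and the globalization are indeed the comparatively formal parts. But the step you flag as ``the main obstacle'' is a genuine gap, and the mechanism you propose for it does not work as stated. The paper's notion of a reduced component is that the $\mathfrak q$-primary component of $\ann(\B)$ equals $\mathfrak q=I(L)$; this says only that $\B_{\mathfrak q}$ is a faithful $\kappa(\mathfrak q)$-module and places no bound on its length, so ``reducedness forces the multiplicity of $\B$ along $C_j$ to be exactly $1$'' is not a consequence of the hypothesis (and in fact the correct generic length is $\dim_{\kappa(\mathfrak q)}\B_{\mathfrak q}=m-1$ for an $m$-dimensional component, not $1$). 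Likewise, knowing the generic length does not pin down the Hilbert polynomial of $\B/I_j\B$ beyond its leading term; one needs control at the non-maximal, non-generic primes of $C_j$ as well.

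The paper closes exactly this gap with two propositions that have no counterpart in your sketch. It introduces the exterior-algebra ideal $J_L=\langle l_1,\dots,l_m\rangle\cap I\supseteq I_L=\bigwedge^2 L$ and the explicitly resolved module $\B_L$ presented by the Koszul-type complex on $I_L$ (this is where $0$-isotropy enters: it guarantees $I_L\subseteq I$). Proposition \ref{prop:crux} shows $I_L=J_L$ if and only if $\B_{\mathfrak q}\simeq(\B_L)_{\mathfrak q}$, via a unit-determinant submatrix argument in one direction and, in the other, a mapping-cone construction producing an extension $0\to S/\mathfrak q\to\B_{K_L}\to\B_L\to 0$ that obstructs the isomorphism. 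Proposition \ref{prop:crux2} shows $I_L=J_L$ if and only if $L$ is reduced, again by explicit matrix manipulation of the presentations. Only then does the free resolution \eqref{eq:BLres} of $\B_L$ yield the Hilbert polynomial $(k-1)\binom{m+k-2}{k}$. Translating the scheme-theoretic hypothesis ``$L$ is reduced'' into the concrete exterior-algebra condition $I_L=J_L$, and that condition into a statement about localizations of $\B$, is the heart of the proof, and it is missing from your proposal. (A smaller point: reducedness of the components does not make $\ann\B$ radical --- embedded primary components are still allowed --- though this does not affect the Hilbert-polynomial argument.)
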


\begin{example} \label{ex:bad}
Examples illustrating the necessity of the hypotheses in the theorem include the following.  
Let $[u,v]=uvu^{-1}v^{-1}$ denote the commutator of $u,v\in G$.
\begin{probs}
\item[(a)] \label{item:not1formal}
The Heisenberg group $G=\langle g_1,g_2 \mid [g_1,[g_1,g_2]], [g_2,[g_1,g_2]]\rangle$ is not $1$-formal.  Here, $\mathcal R^1(G)=H^1(G;\k)$ is $0$-isotropic since the cup product is trivial.  But $\theta_k(G) \neq \theta_k(F_2)$. Since $G$ is nilpotent, 
the Chen groups of $G$ are trivial.  See~\cite{DPS}.
\item[(b)] \label{item:not0isotropic}
The fundamental group $G$ 
of a closed, orientable surface of genus $g\ge 2$ is $1$-formal.  But $\mathcal R^1(G)=H^1(G;\k)$ is not $0$-isotropic, and $\theta_k(G) \neq \theta_k(F_{2g})$.  See~\cite{PS}.
\item[(c)] \label{item:notdisjoint}
Let $G=G_\Gamma$ be the right-angled Artin group corresponding to the graph $\Gamma$ with vertex set $\mathcal V=\{1,2,3,4,5\}$ and edge set $\mathcal E=\{12,13,24,34,45\}$. The resonance variety $\mathcal R^1(G)$ is the union of two $3$-dimensional subspaces in $H^1(G;\k)$ which are not projectively disjoint, and $\theta_k(G) \neq 2 \theta_k(F_3)$. 
See \cite{PS2}.
\item[(d)] \label{item:notreduced}
Let $G=\langle g_1,g_2,g_3,g_4 \mid [g_2,g_3], [g_1,g_4], [g_3,g_4], [g_1,g_3][g_4,g_2]\rangle$. 
As a variety, $\mathcal R^1(G)$ is a $2$-dimensional subspace of $H^1(G;\k)$.  But $\mathcal R^1(G)$ is not reduced, and $\theta_k(G) \neq \theta_k(F_2)$.  We do not know if this group is $1$-formal.  See Example \ref{ex:notred2}.
\end{probs}
\end{example}

Groups which satisfy the hypotheses of Theorem \ref{thm:main} include all arrangement groups. More generally, let $X$ be a smooth quasi-projective variety, and assume that $G=\pi_1(X)$ is a commutator-relators group.  If $X$ admits a smooth compactification with trivial first Betti number, 
work of Deligne \cite{Del} and Morgan \cite{Mor} implies that $G$ is $1$-formal.  In this instance, Dimca-Papadima-Suciu \cite{DPS} show that the irreducible components of $\RR^1(G)$ are all projectively disjoint and $0$-isotropic.   
Thus, Theorem \ref{thm:main} applies when the components of $\RR^1(G)$ are reduced.  
Another interesting example is provided by the ``group of loops.''

Let $F_n$ be the free group of rank $n$.  The basis-conjugating automorphism group, or pure symmetric automorphism group, is the group $\PS_n$ of all automorphisms of $F_n$ which send each generator $x_i$ to a conjugate of itself.  
Results of Dahm \cite{Dahm} and 
Goldsmith \cite{Goldsmith} imply that this group may also be realized as the ``group of loops,'' 
the group of motions of a collection of $n$ unknotted, unlinked oriented circles in $3$-space, where each circle returns to its original position. 

\begin{theo} \label{thm:mccoolchen}
For $k\gg 0$, the ranks of the Chen groups of the basis-conjugating automorphism group  $\PS_n$ are
\[
\theta_k(\PS_n)=(k-1)\binom{n}{2}+(k^2-1)\binom{n}{3}.
\]
\end{theo}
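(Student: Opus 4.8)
The plan is to derive Theorem~\ref{thm:mccoolchen} by applying Theorem~\ref{thm:main} to $G=\PS_n$. Thus the work is to check that $\PS_n$ meets the hypotheses of that theorem — finitely presented, $1$-formal, commutator-relators, with the components of $\RR^1(\PS_n)$ being $0$-isotropic, projectively disjoint, and reduced — and to identify the numbers $h_m$ of $m$-dimensional components of $\RR^1(\PS_n)$; the conclusion is then a one-line binomial simplification.

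First I would record the structural facts. By McCool's presentation, $\PS_n$ has generators $\alpha_{ij}$ ($1\le i\ne j\le n$) and relators $[\alpha_{ij},\alpha_{kl}]$ ($i,j,k,l$ distinct), $[\alpha_{ij},\alpha_{kj}]$ ($i,j,k$ distinct), and $[\alpha_{ik},\alpha_{ij}\alpha_{kj}]$ ($i,j,k$ distinct). Each relator is a commutator of words in the free group on the $\alpha_{ij}$, so $\PS_n$ is a finitely presented commutator-relators group with $H_1(\PS_n;\Z)\cong\Z^{n(n-1)}$, and it is known that $\PS_n$ is $1$-formal. For the resonance variety I would invoke its computation (or reprove it from the cohomology ring of Jensen--McCammond--Meier): writing $H^1(\PS_n;\k)=\k^{n(n-1)}$ with basis $\{e_{ij}\}$ dual to $\{\alpha_{ij}\}$, $\RR^1(\PS_n)$ is a union of local linear components — a $2$-dimensional component $C_{\{i,j\}}=\spn\{e_{ij},e_{ji}\}$ for each $2$-subset $\{i,j\}\subseteq[n]$, which is the pullback of $H^1(F_2)$ along the epimorphism $\PS_n\surj F_2$ sending $\alpha_{ij}\mapsto y_i$, $\alpha_{ji}\mapsto y_j$ and every other generator to $1$; and one $3$-dimensional component $C_{\{i,j,k\}}$ for each $3$-subset $\{i,j,k\}\subseteq[n]$, pulled back along the strand-forgetting epimorphism of $\PS_n$ onto the copy of $\PS_3$ supported on $\{i,j,k\}$ — with no components of dimension $\ge 4$. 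Hence
\[
h_2=\binom n2,\qquad h_3=\binom n3,\qquad h_m=0\ \text{ for }\ m\ne 2,3 .
\]

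Next, hypotheses (i)--(iii) are verified componentwise. The components $C_{\{i,j\}}$ are $0$-isotropic because they are pulled back from free quotients; for $C_{\{i,j,k\}}$ one reduces to $\PS_3$ and checks, using the explicit relations of the Jensen--McCammond--Meier ring, that the cup product $\mu\colon A^1\wedge A^1\to A^2$ vanishes identically on $C_{\{i,j,k\}}\wedge C_{\{i,j,k\}}$. Projective disjointness is checked directly from the linear equations cutting out the components: any two of them meet only at the origin. The delicate point — and the one I expect to be the main obstacle — is reducedness of the scheme $V(\ann\B)$, where $\B$ is the linearized Alexander invariant of $\PS_n$ over $S=\Sym(H_1(\PS_n;\k))$; recall from Example~\ref{ex:notred2} that non-reduced resonance is a genuine possibility. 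I would settle reducedness either by writing out the presentation of $\B$ coming from the McCool relators (equivalently, the linearized Alexander matrix) and examining its Fitting ideals — localizing at the generic point of each $C_I$ to exclude thickening and embedded primes — or by exploiting the almost-direct-product structure of $\PS_n$ together with the quasi-projective discussion following Theorem~\ref{thm:main}.

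With $h_2=\binom n2$, $h_3=\binom n3$, and all other $h_m=0$, Theorem~\ref{thm:main} gives, for $k\gg 0$,
\[
\theta_k(\PS_n)=(k-1)\!\left(\binom n2\binom{k}{k}+\binom n3\binom{k+1}{k}\right)=(k-1)\binom n2+(k-1)(k+1)\binom n3=(k-1)\binom n2+(k^2-1)\binom n3,
\]
which is the stated formula.
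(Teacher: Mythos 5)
Your overall strategy is the paper's: verify the hypotheses of Theorem~\ref{thm:main} for $\PS_n$ and read off $h_2=\binom n2$, $h_3=\binom n3$ from Cohen's computation of $\RR^1(\PS_n)$; the citations for $1$-formality (Berceanu--Papadima), the commutator-relators property (McCool's presentation), and the final binomial arithmetic are all correct. The gap is exactly where you flag it: reducedness is not proved, only deferred to one of two proposed methods, and this step is the entire substance of the paper's proof. The paper uses the symmetric-group action to reduce to the two components $C_{1,2}$ and $C_{1,2,3}$, and then, via the criterion of Propositions~\ref{prop:crux} and~\ref{prop:crux2}, shows $I_L=J_L$ by an explicit coefficient-comparison argument in the exterior algebra $E/I$ of Jensen--McCammond--Meier: one takes a general $g\in I^2$ with $l\wedge g\in I_L$ for all $l\in L$ and forces all but the three "local" coefficients $b^1_{2,3},b^2_{1,3},b^3_{1,2}$ to vanish. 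Some such computation (or your Fitting-ideal localization, which amounts to the same thing) must actually be carried out; it cannot be waved through.

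Your second fallback — "exploiting the almost-direct-product structure of $\PS_n$ together with the quasi-projective discussion following Theorem~\ref{thm:main}" — would not work. First, the quasi-projective discussion only yields that components are $0$-isotropic and projectively disjoint (via Dimca--Papadima--Suciu); the paper is explicit that reducedness is an additional hypothesis that must be checked even in that setting, and $\PS_n$ is not known to be quasi-projective in any case. Second, the almost-direct-product structure is no help: the upper triangular McCool groups $\PS_n^+$ are iterated semidirect products of free groups with trivial monodromy on homology, yet Remark~\ref{rem:upper} exhibits a non-reduced (and non-isotropic) component of $\RR^1(\PS_4^+)$, so that structural property cannot imply reducedness. (Indeed $\PS_n$ itself is not known to admit such a decomposition for $n\ge 4$; only $\PS_n^+$ is.) So the only viable route you name is the first one, and it is left unexecuted. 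A smaller point: your description of $C_{\{i,j,k\}}$ as a pullback from $\PS_3$ is consistent with Cohen's formula $C_{i,j,k}=\spn\{e_{j,i}-e_{k,i},\,e_{i,j}-e_{k,j},\,e_{i,k}-e_{j,k}\}$, but you should either cite that formula or justify the existence of the retraction $\PS_n\twoheadrightarrow\PS_3$ you invoke, since the $0$-isotropy and disjointness checks depend on knowing the components explicitly.
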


Our interest in the relationship between Chen ranks and resonance stems from the theory of hyperplane arrangements.  Let $\A=\{H_1,\dots,H_n\}$ be an arrangement in $\C^\ell$, with complement $M=\C^\ell \smallsetminus \bigcup_{i=1}^n H_i$.  It is well known that the fundamental group $G=\pi_1(M)$ is a commutator-relators group, and is $1$-formal.  Furthermore, in low dimensions, the cohomology of $G$ is isomorphic to that of $M$, $H^{\le 2}(G;\k) \cong H^{\le 2}(M;\k)$, see Matei-Suciu \cite{MS}.  Consequently, the first resonance varieties  
of $G$ and of the Orlik-Solomon algebra $A=H^*(M;\k)$ coincide.  
Falk and Yuzvinsky observed that the irreducible components of $\RR^1(G)$ are $0$-isotropic (see \cite{Fa} and \cite{FY}), and Libgober-Yuzvinsky \cite{LY} showed that these components are projectively disjoint.  
In Section \ref{sec:arr} below, we show that the components of $\RR^1(G)$ are reduced.  
Thus, Theorem \ref{thm:main} yields a combinatorial formula for the Chen ranks of the arrangement group $G$ in terms of the Orlik-Solomon algebra of $\A$.  In \cite{Su}, Suciu conjectured that this formula, the Chen ranks conjecture, holds.

Theorem \ref{thm:main} facilitates the explicit calculation of the Chen ranks of a number of arrangement groups of broad interest.  For example, let $W$ be a finite reflection group, and $\A_W$ the arrangement of reflecting hyperplanes.  The fundamental group $PW$ of the complement of $\A_W$ is the pure braid group associated to $W$. 
If $W=A_n$ is the type A Coxeter group, then $PW=PA_n=P_{n+1}$  is the classical Artin pure braid group on $n+1$ strings, 
whose Chen ranks were determined in \cite{CS1}. 
We complete the picture for the remaining infinite families.

\begin{theo} \label{thm:coxeter}
Let $PA_n$, $PB_n$ and $PD_n$ be the pure braid groups associated to the Coxeter groups $A_n$, $B_n$ and $D_n$.  
For $k\gg 0$, the ranks of the Chen groups of these pure braid groups are
\[
\begin{aligned}
\theta_k(PA_n)&=(k-1) \binom{n+2}{4},\\
\theta_k(PB_n)&=(k-1)\left[16\binom{n}{3}+9\binom{n}{4}\right]+(k^2-1)\binom{n}{2},\\
\theta_k(PD_n)&=(k-1)\left[5\binom{n}{3}+9\binom{n}{4}\right].
\end{aligned}
\]
\end{theo}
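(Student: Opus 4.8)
The plan is to deduce Theorem~\ref{thm:coxeter} from Theorem~\ref{thm:main}, so that the problem becomes one of counting resonance components. Write $\A_{A_n},\A_{B_n},\A_{D_n}$ for the three reflection arrangements and $G$ for the fundamental group of the corresponding complement, so $G=PA_n,PB_n,PD_n$ respectively. In each case $G$ is an arrangement group, hence finitely presented, $1$-formal, and a commutator-relators group; the components of $\RR^1(G)$ are $0$-isotropic (Falk--Yuzvinsky \cite{Fa,FY}), projectively disjoint (Libgober--Yuzvinsky \cite{LY}), and reduced (Section~\ref{sec:arr}). Thus Theorem~\ref{thm:main} applies and gives, for $k\gg 0$,
\[
\theta_k(G)=(k-1)\sum_{m\ge 2}h_m\binom{m+k-2}{k},
\]
where $h_m$ is the number of $m$-dimensional components of $\RR^1(G)$. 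Since $\binom{k}{k}=1$, $\binom{k+1}{k}=k+1$, and $k^2-1=(k-1)(k+1)$, the asserted formulas are equivalent to the following three component counts: for $\A_{A_n}$, every component of $\RR^1$ is $2$-dimensional and there are $\binom{n+2}{4}$ of them; for $\A_{B_n}$, there are $\binom{n}{2}$ components of dimension $3$, there are $16\binom{n}{3}+9\binom{n}{4}$ of dimension $2$, and none of dimension $\ge 4$; for $\A_{D_n}$, every component is $2$-dimensional and there are $5\binom{n}{3}+9\binom{n}{4}$ of them. So everything reduces to enumerating resonance components.

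To count them I would use the standard combinatorial description: the components of $\RR^1$ of an arrangement $\A$ are linear and fall into \emph{local} components, one for each rank-two flat $X$ with $|\A_X|\ge 3$, of dimension $|\A_X|-1$, together with \emph{non-local} components, one for each neighborly partition (multinet) on a sub-arrangement, a multinet with $k\ge 3$ classes contributing a component of dimension $k-1$. For type $A_n$ this is classical: the rank-two flats of $\A_{A_n}$ with at least three hyperplanes are exactly the triple points $H_{ij}\cap H_{jk}\cap H_{ik}$, one per $3$-subset of $\{1,\dots,n+1\}$ and each of multiplicity exactly $3$, giving $\binom{n+1}{3}$ local components, all $2$-dimensional; and the only neighborly partitions are the unique $(3,2)$-nets on the $\binom{n+1}{4}$ sub-arrangements isomorphic to the braid arrangement $\A_{A_3}$, giving $\binom{n+1}{4}$ further $2$-dimensional components. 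Hence $h_2=\binom{n+1}{3}+\binom{n+1}{4}=\binom{n+2}{4}$ and $h_m=0$ for $m\ge 3$, recovering $\theta_k(PA_n)=(k-1)\binom{n+2}{4}$ as in \cite{CS1}.

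For types $B_n$ and $D_n$ the local components are again easy to list. In $\A_{B_n}$ the rank-two flats of multiplicity $\ge 3$ are the flats $\{x_i=x_j=0\}$, carrying the four hyperplanes $x_i,x_j,x_i-x_j,x_i+x_j$ ($\binom{n}{2}$ of them, each of dimension $3$), and the flats $\{x_i=\varepsilon x_j=\varepsilon' x_k\}$ with $\varepsilon,\varepsilon'\in\{\pm 1\}$, carrying three hyperplanes ($4\binom{n}{3}$ of them, of dimension $2$); there is no flat of multiplicity $\ge 5$, and one checks there is no non-local multinet with $\ge 4$ classes, so $h_3=\binom{n}{2}$ and $h_m=0$ for $m\ge 4$. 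In $\A_{D_n}$ there are no coordinate hyperplanes, so the only flats of multiplicity $\ge 3$ are the $\{x_i=\varepsilon x_j=\varepsilon' x_k\}$, giving $4\binom{n}{3}$ local components of dimension $2$ and nothing in higher dimension. The remaining, decisive, task is to enumerate the non-local components. I would first prove a localization statement: every non-local component of $\RR^1(\A_{B_n})$ or $\RR^1(\A_{D_n})$ is supported on hyperplanes involving at most four of the coordinates, hence already occurs in a sub-arrangement $\A_{B_4}$ or $\A_{D_4}$. A finite computation then finishes it: for type $D$ one finds exactly one $(3,2)$-net on each $\A_{D_3}\cong\A_{A_3}$ (contributing $\binom{n}{3}$) and exactly nine further neighborly partitions supported on each quadruple of coordinates inside $\A_{D_4}$ (contributing $9\binom{n}{4}$), whence $h_2=4\binom{n}{3}+\binom{n}{3}+9\binom{n}{4}=5\binom{n}{3}+9\binom{n}{4}$; for type $B$ one finds twelve neighborly partitions supported on each triple of coordinates inside $\A_{B_3}$ and nine more on each quadruple, whence $h_2=4\binom{n}{3}+12\binom{n}{3}+9\binom{n}{4}=16\binom{n}{3}+9\binom{n}{4}$. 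Substituting these counts into the displayed formula yields the three stated expressions.

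The main obstacle is exactly this non-local enumeration for $B_n$ and $D_n$, which has two parts. The first is the localization statement: it should follow from the combinatorics of multinets (connectivity of the incidence structure of a multinet, together with the fact that the ``generic-coordinate'' part of these arrangements is of braid type and supports only the small $K_4$-nets already accounted for), but some care is needed because $\RR^1$ does not behave simply under deletion, so the argument must be phrased directly in terms of multinets rather than restriction of the variety. The second is the correctness of the finite base-case counts --- twelve for $\A_{B_3}$, nine for $\A_{B_4}$ and $\A_{D_4}$ --- each of which amounts to classifying all multinets on a specific reflection arrangement, done by direct combinatorial inspection (or by appeal to known resonance computations for these small arrangements); the $D_4$ count, in particular, reflects its triality. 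By comparison, verifying the hypotheses of Theorem~\ref{thm:main} and treating type $A_n$ are routine.
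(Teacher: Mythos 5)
Your strategy is exactly the paper's: reduce via Theorem \ref{thm:main} (together with the reducedness result of Section \ref{sec:arr}) to counting the irreducible components of $\RR^1$ by dimension, and your final tallies --- $\binom{n+2}{4}$ two-dimensional components for type A, $16\binom{n}{3}+9\binom{n}{4}$ two-dimensional plus $\binom{n}{2}$ three-dimensional for type B, and $5\binom{n}{3}+9\binom{n}{4}$ two-dimensional for type D --- agree with Example \ref{ex:braid} and Theorems \ref{thm:Dres} and \ref{thm:Bres}. The conversion of the general formula using $\binom{k}{k}=1$ and $\binom{k+1}{k}=k+1$ is also correct.

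The gap is that the decisive step is asserted rather than proved: one must show that the components you list are \emph{all} of the components, i.e., that no subarrangement of $\BB_n$ or $\DD_n$ other than those isomorphic to $\A_3$, $\A_4$, $\BB_2$, $\BB_3$, or $\DD_4$ supports a nontrivial neighborly partition. You correctly isolate this as the main obstacle and split it into a localization claim (only four coordinates can be involved) plus a finite check, but you carry out neither part, and this is where essentially all of the work in Section \ref{sec:cox} of the paper lies. The localization is proved there by showing that any subarrangement containing three hyperplanes $H^a_{i,j}$, $H^b_{k,l}$, $H^c_{r,s}$ with $\abs{\{i,j,k,l,r,s\}}\ge 5$ forces every neighborly partition to be trivial, via transverse pairs and the monochromaticity of higher-multiplicity rank-two flats; and the finite check is a genuine case analysis over \emph{all} subarrangements of $\BB_4$ and $\DD_4$ (organized by how many coordinate hyperplanes are present), not merely a classification of multinets on $\BB_4$ and $\DD_4$ themselves. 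Without these arguments, or an explicit verification replacing them, the claimed component counts --- and hence the three formulas --- are not established.
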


\section{Preliminaries} \label{sec:setup}
\subsection{Alexander invariant}\label{subsec:AI}
Let $G$ be a finitely presented group, with abelianization $G/G'$ and $\aa\colon G \to G/G'$ the natural projection.  Let $\Z{G}$ be the integral group ring of $G$, and let $J_G=\ker(\epsilon)$ be the kernel of the augmentation map $\epsilon\colon \Z{G} \to \Z$, given by $\epsilon\bigl(\sum m_g g)=\sum m_g$.  Classically associated to the group $G$ are the $\Z(G/G')$-modules $A_G=\Z(G/G') \otimes_{\Z{G}} J_G$ and $B_G=G'/G''$.  The Alexander module $A_G$ is induced from $J_G$ by the extension of the abelianization map $\aa$ to group rings.  The action of $G/G'$ on the Alexander invariant $B_G$ is given on cosets of $G''$ by $gG' \cdot hG''=ghg^{-1}G''$ for $g\in G$ and $h\in G'$.  These modules, and the augmentation ideal of $\Z(G/G')$, comprise the Crowell exact sequence $0 \to B_G \to A_G \to J_{G/G'}\to 0$.

Let $J=J_{G/G'}$, and consider the $J$-adic filtration $\{J^k B_G\}_{k\ge 0}$ of the Alexander invariant.  Let $\gr(B_G)=\bigoplus_{k\ge 0} J^kB_G/J^{k+1} B_G$ be the associated graded module over the ring $\gr(\Z(G/G'))=\bigoplus_{k\ge 0} J^k/J^{k+1}$.  A basic observation of Massey \cite{Ma} shows that 
\[
\gr_k(G'/G'')=\gr_{k-2}(B_G)
\]
for $k\ge 2$, where the associated graded on the left is taken with respect to the lower central series filtration.  Consequently, the Chen ranks of $G$ are given by
\[
\sum_{k\ge 0} \theta_{k+2}(G) t^k = \Hilb(B_G \otimes\k,t),
\]
for any field $\k$ of characteristic zero.

Now assume that $G=\langle g_1,\dots,g_n \mid r_1,\dots,r_m\rangle$ admits a commutator-relators presentation with $n$ generators, and let $p\colon F_n \to G$ be the natural projection, where $F_n=\langle g_1,\dots,g_n\rangle$.  Set $t_i=\aa \circ p(g_i)$.  The choice of basis $t_1,\dots,t_n$ for $G/G' \cong \Z^n$ identifies the group ring $\Z(G/G')$ with the Laurent polynomial ring $\Lambda=\Z[t_1^{\pm 1},\dots,t_n^{\pm 1}]$.  With this identification, the augmentation ideal is given by $J=(t_1-1,\dots,t_n-1)$.  

Presentations for the Alexander module $A_G$ and Alexander invariant $B_G$ may be obtained using the Fox calculus \cite{Fox}.  For $1\le i \le n$, let $\partial_i = \frac{\partial}{\partial g_i}\colon F_n \to \Z{F_n}$ be the Fox free derivatives.  The Alexander module has presentation
\[
\Lambda^m \xrightarrow{\ D\ } \Lambda^n \longrightarrow A_G \longrightarrow 0,
\]
where $D = \bigl(\aa \circ p \circ \partial_i(r_j)\bigr)$ is the Alexander matrix of (abelianized) Fox derivatives.

Let $(C_*,d_*)$ be the standard Koszul resolution of $\Z$ over $\Lambda$, where $C_0=\Lambda$, $C_1=\Lambda^n$ with basis $e_1,\dots,e_n$, and $C_k=\Lambda^{\binom{n}{k}}$.  The differentials are given by 
$d_k(e_I)=\sum_{r=1}^k (-1)^{r+k}(t_{i_r}-1) e_{I \smallsetminus\{i_r\}}$, where $e_I=e_{i_1}\wedge\dots\wedge e_{i_k}$ if $I=\{i_1,\dots,i_k\}$.  
By the fundamental formula of Fox calculus \cite{Fox}, we have $d_1\circ D=0$.  This yields a chain map
\begin{equation} \label{eq:chainmap1}
 \begin{CD}
    @. @. \Lambda^m                         @>D>> \Lambda^n     @>{d_1}>> \Lambda\\
   @.  @.                 @V\alpha VV                           @V\id VV                                                 @V\id VV                      \\
\cdots  @>{d_4}>> \Lambda^{\binom{n}{3}} @>{d_3}>> \Lambda^{\binom{n}{2}} @>{d_2}>>  \Lambda^n           @>{d_1}>> \Lambda  
 \end{CD}
\end{equation}
Basic homological algebra insures the existence of the map $\alpha$ satisfying $d_2\circ \alpha = D$.
Analysis of the mapping cone of this chain map as in \cite{Ma,CSai} then yields a presentation for the Alexander invariant $B_G=\ker(d_1)/\im(D)$:
\[
\Lambda^{\binom{n}{3}+m} \xrightarrow{\ \Delta=d_3+\alpha\ } \Lambda^{\binom{n}{2}} \longrightarrow B_G \longrightarrow 0.
\]

\subsection{Linearized Alexander invariant} The ring $\Lambda=\Z[t_1^{\pm 1},\dots,t_n^{\pm 1}]$ may be viewed as a subring of the formal power series ring $P=\Z[[x_1,\dots,x_n]]$ via the Magnus embedding, defined by $\psi( t_i)= 1+x_i$.  Note that the augmentation ideal $J=(t_1-1,\dots,t_n-1)$ is sent to the ideal $\m=(x_1,\dots,x_n)$.  Passing to associated graded rings, with respect to the filtrations by powers of $J$ and $\m$ respectively, the homomorphism $\gr(\psi)$ identifies $\gr(\Lambda)$ with the polynomial ring $S=\gr(P)=\Z[x_1,\dots,x_n]$.

For $q\ge 0$, let $\psi^{(q)}\colon \Lambda \to P/\m^{q+1}$ denote the $q$-th truncation of $\psi$.  Since $G$ is a commutator-relators group, all entries of the Alexander matrix $D$ are in the augmentation ideal $J$.  It follows that $\psi^{(0)}(D)$ is the zero matrix.  Consequently, all entries of the linearized Alexander matrix $\psi^{(1)}(D)$ are in $\m/\m^2=\gr_1(P)$, so may be viewed as linear forms in the variables of $S$.

For the $n$-generator, commutator-relators group $G$, the polynomial ring $S=\Z[x_1,\dots,x_n]$ may be identified with the symmetric algebra on $H_1(G)=G/G'=\Z^n$.  Let $E=\bigwedge H_1(G)$ be the exterior algebra (over $\Z$), 
and let $(E\otimes S,\partial_*)$ be the Koszul complex of $S$.  Observe that $\partial_k=\psi^{(1)}(d_k)$, in particular, $\partial_1$ is the matrix of variables of $S$, with image the ideal $\m=(x_1,\dots,x_n)$. Note also that $\partial_1 \circ \psi^{(1)}(D)=0$.  The linearized Alexander invariant of $G$ is the $S$-module $\B_\Z=\ker(\partial_1)/\im( \psi^{(1)}(D))$.
\begin{thm}[Papadima-Suciu \cite{PS} Cor. 9.7] Let $G$ be a $1$-formal, commutator-relators group with associated linearized Alexander invariant $\B_\Z$, and let $\k$ be a field of characteristic zero.  Then
\[
\sum_{k\ge 0} \theta_{k+2}(G) t^k = \Hilb(\B_\Z \otimes\k,t).
\]
\end{thm}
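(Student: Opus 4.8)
The plan is to run the whole argument through Lie algebras, using $1$-formality to bridge the group $G$ and its quadratic holonomy Lie algebra, and to recognize both sides of the asserted identity as Hilbert series of a single graded Lie algebra. First, I would reduce using Massey's observation, quoted above: $\gr_k(G/G'')\cong\gr_{k-2}(B_G)$ for $k\ge 2$, with the lower central series grading on the left and the $J$-adic grading on the right. Hence $\sum_{k\ge 0}\theta_{k+2}(G)t^k=\Hilb(\gr(B_G)\otimes\k,t)$ is the Hilbert series of the \emph{Chen Lie algebra} $\gr(G/G'')\otimes\k$, reindexed so that its degree-$(k+2)$ graded piece contributes to the coefficient of $t^k$. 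It therefore suffices to construct an isomorphism of graded vector spaces between $\gr(G/G'')\otimes\k$ and a graded Lie algebra whose Hilbert series equals $\Hilb(\B_\Z\otimes\k,t)$.

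Next I would pin down the candidate Lie algebra. Let $\mathfrak q$ be the quadratic graded Lie algebra presented by $\mathbb{L}(H_1(G;\k))$ modulo the degree-two relations spanned by the columns of the linearized Alexander matrix $\psi^{(1)}(D)$ — these columns lie in $\ker(\partial_1)$ in internal degree one, hence are $\partial_2$-images of elements of $\Lambda^2 H_1(G;\k)$. Essentially by its definition, $\B_\Z\otimes\k=\ker(\partial_1)/\im(\psi^{(1)}(D))$ is, after the same reindexing as above, the abelian ideal $\mathfrak q'/\mathfrak q''$ (derived modulo second-derived ideal) of $\mathfrak q$, viewed as a module over $S\otimes\k=\Sym(H_1(G;\k))$: Koszulness of the complex $(E\otimes S,\partial_*)$ gives $\ker(\partial_1)\cong\coker(\partial_3)$, and dividing further by the $S$-submodule generated by the degree-two relations of $\mathfrak q$ returns exactly $\mathfrak q'/\mathfrak q''$. (When $G$ is a commutator-relators group this $\mathfrak q$ is the holonomy Lie algebra of $G$, because the span of the linearized Fox derivatives of the relators is $\ker(\Lambda^2 H_1(G;\k)\to\gr_2(G)\otimes\k)$.) Consequently $\Hilb(\B_\Z\otimes\k,t)=\sum_{k\ge 0}\dim_\k(\gr_{k+2}(\mathfrak q/\mathfrak q''))\,t^k$, which is the Hilbert series of the Chen Lie algebra of $\mathfrak q$.

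Then I would invoke $1$-formality to identify the two Chen Lie algebras, i.e.\ to prove $\gr(G/G'')\otimes\k\cong\mathfrak q/\mathfrak q''$ as graded Lie algebras. By definition of $1$-formality the Malcev Lie algebra $\mathfrak m(G)$ over $\k$ is isomorphic, as a filtered Lie algebra, to the degree-completion of a quadratic Lie algebra; comparing graded pieces in degrees $1$ and $2$ — the degree-two comparison using the Fox-calculus identification of $\gr_2(G)\otimes\k$ with $\Lambda^2 H_1(G;\k)$ modulo the span of the linearized relators — forces that quadratic Lie algebra to be $\mathfrak q$, so $\mathfrak m(G)\cong\widehat{\mathfrak q}$. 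Passing to maximal metabelian quotients, functoriality of the Malcev completion together with the fact that $G/G''$ is metabelian gives $\mathfrak m(G/G'')\cong\mathfrak m(G)/\overline{[\mathfrak m(G)',\mathfrak m(G)']}$, while $\widehat{\mathfrak q}/\overline{[\widehat{\mathfrak q}',\widehat{\mathfrak q}']}\cong\widehat{\mathfrak q/\mathfrak q''}$ because $\mathfrak q$ is generated in degree $1$. Taking associated graded Lie algebras and using the standard identifications $\gr(\mathfrak m(G/G''))\cong\gr(G/G'')\otimes\k$ and $\gr(\widehat{\mathfrak q/\mathfrak q''})\cong\mathfrak q/\mathfrak q''$ (the latter valid because $\mathfrak q/\mathfrak q''$ is finitely generated in degree $1$ with finite-dimensional graded pieces) yields the desired isomorphism; combined with the first two steps, this proves the theorem.

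The main obstacle is the last step: one must commute the three functors ``Malcev completion'', ``maximal metabelian quotient'', and ``pass to the associated graded Lie algebra'', none of which commute in general, and $1$-formality is precisely the hypothesis that makes the composite behave well. Concretely, one needs the non-formal identification $\mathfrak m(G/G'')\cong\mathfrak m(G)/\overline{\mathfrak m(G)''}$, and one must verify that taking $\gr$ of the \emph{completed} metabelian Lie algebra $\widehat{\mathfrak q/\mathfrak q''}$ recovers $\mathfrak q/\mathfrak q''$ on the nose rather than some larger object — where the finiteness of $\mathfrak q$ in each degree is essential. By contrast, the coefficientwise inequality $\Hilb(\B_\Z\otimes\k,t)\ge\Hilb(\gr(B_G)\otimes\k,t)$ is formal: taking the $J$-adic associated graded of the presentation $\Lambda^{\binom{n}{3}+m}\xrightarrow{\ \Delta\ }\Lambda^{\binom{n}{2}}\to B_G\to 0$ exhibits $\gr(B_G)\otimes\k$ as a quotient of $\B_\Z\otimes\k$, so all the genuine content lies in the reverse inequality, which is where $1$-formality must enter.
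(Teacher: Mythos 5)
The paper gives no proof of this statement---it is quoted directly from Papadima--Suciu \cite{PS}, Cor.~9.7---and your outline essentially reconstructs the argument given there: identify $\B_\Z\otimes\k$ with the infinitesimal Alexander invariant $\mathfrak{q}'/\mathfrak{q}''$ of the (quadratic) holonomy Lie algebra, use $1$-formality to obtain $\mathfrak{m}(G)\cong\widehat{\mathfrak{q}}$, then pass to maximal metabelian quotients and associated graded and invoke Massey's identification on the group side. The one caution is that the identification $\mathfrak{m}(G/G'')\cong\mathfrak{m}(G)/\overline{\mathfrak{m}(G)''}$ is not a formal consequence of functoriality plus metabelianity of $G/G''$ (surjectivity is formal, injectivity is not) but a substantive theorem proved in \cite{PS}, valid for all finitely generated groups independently of formality; you correctly flag this step as the main obstacle, and with it supplied your argument matches the cited proof.
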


A presentation for the linearized Alexander invariant may be obtained by a procedure analogous to that used to obtain one for the Alexander invariant itself.  Linearizing the equality $d_2\circ \alpha = D$, we obtain $\partial_2 \circ \alpha_2 = \psi^{(1)}(D)$, where $\alpha_2=\psi^{(0)}(\alpha)$, the entries of which are in $S/\m$.  Thus, $\alpha_2$ is induced by a map $\Z^m \to \Z^{\binom{n}{2}}$, which we denote by the same symbol. These considerations yield a chain map
\begin{equation} \label{eq:chainmap2}
 \begin{CD}
    @. @. S^m                         @>D^{(1)}>> S^n     @>{\partial_1}>> S\\
   @.  @.                 @V\alpha_2 VV                           @V\id VV                                                 @V\id VV                      \\
\cdots  @>{\partial_4}>> S^{\binom{n}{3}} @>{\partial_3}>> S^{\binom{n}{2}} @>{\partial_2}>>  S^n           @>{\partial_1}>> S  
 \end{CD}
\end{equation}
where $D^{(1)}=\psi^{(1)}(D)$.  
Analysis of the mapping cone of this chain map then yields a presentation for the linearized Alexander invariant:
\begin{equation} \label{eq:linalexpres}
S^{\binom{n}{3}+m} \xrightarrow{\ \Delta^{\lin}=\partial_3+\alpha_2\ } S^{\binom{n}{2}} \longrightarrow \B_\Z \longrightarrow 0.
\end{equation}

\subsection{Resonance}
In the case where $G$ is an arrangement group, it is known \cite{CScv} that the annihilator of the linearized Alexander invariant defines the first resonance variety.  We show that this holds for an arbitrary commutator-relators group.  Let $\k$ be a field of characteristic zero, and consider $\B:=\B_\Z\otimes \k$, the linearized Alexander invariant with coefficients in $\k$.  Abusing notation, let $S=\k[x_1,\dots,x_n]$.

\begin{thm} \label{thm:ann}
Let $G$ be an $n$ generator, commutator-relators group.  The resonance variety $\RR^1(G)$ is the variety defined by the annihilator of the linearized Alexander invariant $\B$, $\RR^1(G) = V(\ann \B)$.
\end{thm}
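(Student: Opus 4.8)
The plan is to compare $\RR^1(G)$ and $V(\ann\B)$ point by point on $A^1$. Identify $A^1=H^1(G;\k)$ with $H_1(G;\k)^*$, so that a closed point of $\operatorname{Spec}S$ is an element $a\in A^1$, with maximal ideal $\m_a$; since $\RR^1(G)$ and $V(\ann\B)$ are both Zariski-closed in $A^1$, it suffices to compare closed points. On the resonance side, for $a\neq 0$ the Aomoto complex \eqref{eq:aomoto} gives $H^1(A,a)=\ker(a\colon A^1\to A^2)/\k a$, so $a\in\RR^1(G)$ iff $\dim\ker(a\colon A^1\to A^2)\ge 2$, i.e. iff there is $b\in A^1$ linearly independent from $a$ with $a\cup b=0$, that is $a\wedge b\in W_0:=\ker\bigl(\mu\colon\bigwedge^2 A^1\to A^2\bigr)$. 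On the linearized-Alexander side, $\B$ is finitely generated, so $a\in V(\ann\B)=\supp\B$ iff $\B\otimes_S(S/\m_a)\neq 0$, that is, iff the evaluation at $a$ of the presentation \eqref{eq:linalexpres}, the map $\partial_3(a)\oplus\alpha_2\colon\k^{\binom{n}{3}}\oplus\k^{m}\to\k^{\binom{n}{2}}$, is not surjective: $\im\partial_3(a)+\im\alpha_2\subsetneq\bigwedge^2 H_1(G;\k)$.

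Two identifications connect the two conditions. First, evaluating the Koszul differential at $a\in A^1=H_1(G;\k)^*$ yields contraction: from $\partial_k(e_I)=\sum_{r=1}^{k}(-1)^{r+k}x_{i_r}e_{I\smallsetminus\{i_r\}}$ one reads $\partial_3(a)=\iota_a\colon\bigwedge^3 H_1\to\bigwedge^2 H_1$, whose image is $\bigwedge^2(\ker a)$ for $a\neq 0$, and whose annihilator in $\bigwedge^2 A^1=(\bigwedge^2 H_1)^*$ is therefore $a\wedge A^1$. Second — the crux — $\im\alpha_2=W_0^{\perp}$ inside $\bigwedge^2 H_1$. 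Since $G$ is a commutator-relators group, the fundamental formula of Fox calculus makes the degree-two part of the Magnus expansion of each relator $r_j$ a Lie element $\bar r_j\in\bigwedge^2 H_1$; the defining relation $\partial_2\circ\alpha_2=D^{(1)}$ for $\alpha_2$ (see the chain map \eqref{eq:chainmap2}), together with exactness of the Koszul complex at $S^n$, identifies the constant matrix $\alpha_2$ with the one whose $j$-th column is $\bar r_j$, so $\im\alpha_2=\spn\{\bar r_1,\dots,\bar r_m\}$. On the other hand, the presentation $2$-complex of $G$ computes $H^{\le 1}$ and the cup product $\bigwedge^2 H^1\to H^2$, and the classical cochain formula for that cup product is governed by precisely these degree-two Magnus coefficients; hence $W_0=\ker\mu=\spn\{\bar r_1,\dots,\bar r_m\}^{\perp}$. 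Putting the two together, $a\in V(\ann\B)$ iff $(\im\partial_3(a))^{\perp}\cap(\im\alpha_2)^{\perp}\neq 0$, iff $(a\wedge A^1)\cap W_0\neq 0$, iff there is $b\in A^1$ linearly independent from $a$ with $a\wedge b\in W_0$ — exactly the condition for $a\in\RR^1(G)$. The origin is dealt with by direct inspection: $0$ lies in $\RR^1(G)$ always, and in $V(\ann\B)$ exactly when $\B\neq 0$.

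I expect the main obstacle to be the second identification, $\im\alpha_2=(\ker\mu)^{\perp}$: the assertion that the linearized Fox matrix $D^{(1)}$ genuinely records the cup product on $H^2(G;\k)$. Some care is needed because $\alpha_2$ is only determined modulo $\ker\partial_2=\im\partial_3$, so one must work with the specific constant (``degree zero'') lift constructed in Section~\ref{sec:setup}, verify that its image is the span of the degree-two relator symbols, and then match those symbols against the classical formula for cup products on a $2$-complex. A secondary point, which explains why no additional hypothesis — in particular no $1$-formality — is needed here, is that $\RR^1(G)$ depends only on $A^1$, $A^2$, and the map $\mu$, so the structure of $A$ in degrees $\geq 3$ plays no role.
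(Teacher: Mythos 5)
Your proposal is correct, and it rests on the same key input as the paper's proof --- the Matei--Suciu identification of the linearized Alexander data with the cup product --- but it packages the argument differently. The paper first replaces $G$ by its presentation $2$-complex $X$ (using $H^1(G;\k)\cong H^1(X;\k)$ and $H^2(G;\k)\hookrightarrow H^2(X;\k)$ to see the two cup products have the same kernel), then quotes the determinantal description $\RR^1(X)=\{a \mid \rank D^{(1)}(a)<n-1\}$ from \cite{MS,Su1}, and finally relates $\rank D^{(1)}(a)$ to $\rank \Delta^{\lin}(a)$ by a mapping-cone exercise. You instead analyze surjectivity of $\Delta^{\lin}(a)=\partial_3(a)+\alpha_2$ head-on: $\im\partial_3(a)=\bigwedge^2\ker(a)$ with annihilator $a\wedge A^1$, and $\im\alpha_2=(\ker\mu)^{\perp}$, so non-surjectivity is exactly $(a\wedge A^1)\cap\ker\mu\neq 0$, which matches the cup-product description of $\RR^1$. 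This unwinds the paper's two citations into explicit linear algebra; the fact $\im\alpha_2=(\ker\mu)^{\perp}$ that you isolate as the crux is precisely the statement (recorded in the paper right after the theorem, with reference to \cite{MS}) that $\alpha_2$ is dual to $\mu$, and your Magnus-expansion justification of it is the standard proof. Two small points of care: your identification of $\ker\mu$ with $\spn\{\bar r_1,\dots,\bar r_m\}^{\perp}$ is literally a statement about the cup product of $X$, so you still need the injection $H^2(G;\k)\hookrightarrow H^2(X;\k)$ (dual to the Hopf surjection) to transfer it to $G$ --- the paper makes this reduction explicit and you should too; and your remark about the origin is apt, since the asserted equality can fail there (e.g.\ $G=\Z^2$ has $\B=0$ but $0\in\RR^1(G)$), a boundary convention that affects the paper's own argument equally and is harmless for the intended (projective) use of the statement.
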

\begin{proof}
Let $X$ be the $2$-dimensional CW-complex corresponding to a commutator-relators presentation of $G$ with $n$ generators, and let $\RR^1(X)\subset \k^{n}$ be the first resonance variety of the cohomology ring $H^*(X;\k)$.  We first show that $\RR^1(G)=\RR^1(X)$.

A $K(G,1)$ space may be obtained from $X$ by attaching cells of dimension at  least $3$.  The resulting inclusion map $X \hookrightarrow K(G,1)$ induces an isomorphism between $H^1(G;\k)$ and $H^1(X;\k)$. Identify these cohomology groups.  Dualizing the Hopf exact sequences reveals that $H^2(G;\k) \hookrightarrow H^2(X;\k)$. Consequently, if $a,b\in H^1(G;\k)=H^1(X;\k)$, then $a\cup b=0$ in $H^2(G\;k)$ if and only if 
$a\cup b=0$ in $H^2(X;\k)$.  It follows that $\RR^1(G)=\RR^1(X)$.

Consider the chain map \eqref{eq:chainmap2}, now with $S=\k[x_1,\dots,x_n]$.  For $f$ a map of free $S$-modules and $a \in \k^n$, denote the evaluation of $f$ at $a$ by $f(a)$. The resonance variety $\RR^1(G)=\RR^1(X)$ may be realized as the variety in $H^1(G;\k)=\k^{n}$ defined by the vanishing of the $(n-1)\times (n-1)$ minors of the linearized Alexander matrix $D^{(1)}$, see \cite{MS,Su1}.  In other words, 
\[
\RR^1(G)=\{ a \in \k^{n} \mid \rank D^{(1)}(a) <n-1\}.
\]
An exercise with the mapping cone of \eqref{eq:chainmap2} reveals that $\rank D^{(1)}(a) <n-1$ if and only if $\rank \Delta^{\lin}(a)<\binom{n}{2}$.  This implies that $\RR^1(G) = V(\ann \B)$.
\end{proof}

Over the field $\k$, the presentation \eqref{eq:linalexpres} of the linearized Alexander invariant may be simplified.  
Recall that $X$ is the presentation $2$-complex corresponding to an $n$-generator, commutator-relators presentation of $G$. Let $H=H_1(X;\Z)=H_1(G;\Z)=\Z^n$. Specializing \eqref{eq:chainmap2} at $x_i=0$ (resp., \eqref{eq:chainmap1} at $t_i=1$) yields $\alpha_2\colon H_2(X;\k) \to H_2(H;\k)$, which is dual to the cup product map $\mu\colon H \wedge H\to H^2(X;\k)$, see \cite{MS}.   %
The cohomology ring $E=H^*(H;\k)$ is an exterior algebra. 
Let $I \subset E$ be the ideal generated by $\ker(\mu\colon E^1 \wedge E^1 \to H^2(X;\k))$.  Writing $A=E/I$, for $a \in A^1=E^1$, we obtain a short exact sequence of cochain complexes 
$0 \to (I,a) \to (E,a) \to (A,a) \to 0$.  If $a\neq 0$, $(E,a)$ is acyclic, and $H^1(A,a)\neq 0$ if and only if $\ker(I^2\xrightarrow{\ a\ }I^3) \neq 0$.  Thus,
\[
\RR^1(G) = \{ a \in A^1 \mid \ker(I^2\xrightarrow{\ a\ }I^3) \neq 0\}.
\]

If $e_1,\dots,e_n\in E^1$ generate the exterior algebra $E$, let $\delta^k\colon E^{k-1}\otimes S \to E^k\otimes S$ denote multiplication by $\sum_{i=1}^n x_ie_i$, dual to the Koszul differential $\partial_k\colon E^k\otimes S \to E^{k-1}\otimes S$.  Tensoring the exact sequence $0\to I \to E \to A\to 0$ with $S$, we obtain 
a commutative diagram of cochain complexes, with exact columns
\begin{equation} \label{eq:IEA}
 \begin{CD}
\cdots @<{\delta^4_A}<< A^3\otimes S @<{\delta^3_A}<<A^2\otimes S  @<{\delta^2_A}<< A^1\otimes S     @<{\delta^1_A}<< A^0\otimes S\\
   @.  @AAA                 @A\mu\otimes\id AA                           @A\id AA                                                 @A\id AA                      \\
\cdots  @<{\delta^4}<< E^3\otimes S @<{\delta^3}<< E^2\otimes S @<{\delta^2}<<  E^1\otimes S           @<{\delta^1}<< E^0\otimes S\\
   @.        @A AA                                                 @A AA                      \\
\cdots  @<{\delta^4_I}<< I^3\otimes S @<{\delta^3_I}<< I^2\otimes S 
 \end{CD}
\end{equation}
where $\delta^k_I$ denotes the restriction, and $\delta^k_A$ denotes the induced map on the quotient.
Referring to the diagram \eqref{eq:chainmap2}, we have $A_2 \otimes S\cong S^m/\ker(\alpha_2\colon S^m \to S^{\binom{n}{2}})$.  The map $D^{(1)}=\psi^{(1)}(D)$ is trivial on this kernel, let $\bar{D}^{(1)}$ denote the induced map on the quotient.  Then the linearized Alexander invariant may be realized as $\B=\ker(\partial_1)/\im(\bar{D}^{(1)})$, and the map $\delta^2_A$ is dual to $\bar{D}^{(1)}$.  Dualizing the diagram \eqref{eq:IEA}, we obtain an exact sequence of chain complexes 
\[
0 \to (A\otimes S,\partial^A) \to (E \otimes S,\partial^E) \to (I \otimes S,\partial^I) \to 0, 
\]
where $\partial_\bullet$ is dual to $\delta^\bullet$. Since $(E\otimes S,\partial^E)$ is acyclic, we have $\B=H_1(A\otimes S,\partial^A)=H_2(I\otimes S,\partial^I)$, yielding the following presentation for $\B$:
\begin{equation} \label{eq:smalllinalexpres}
I^3\otimes S \xrightarrow{\ \partial_3^I\ } I^2\otimes S \longrightarrow \B \longrightarrow 0.
\end{equation}

\section{Chen ranks from resonance} \label{sec:proveA&B}
In this section, we prove Theorem \ref{thm:main}.  For a group $G$ satisfying the hypotheses of Theorem \ref{thm:main}, we determine the ranks of the Chen groups of $G$ from the resonance variety $\RR^1(G)$,   

Let $G$ be a finitely presented, $1$-formal, commutator-relators group.  These assumptions on $G$ insure (i) that the ranks of the Chen groups are given by the Hilbert series of the linearized Alexander invariant $\B$ of $G$ (with coefficients in $\k$):
\[
\sum_{k\ge 2} \theta_k(G) t^k = \Hilb(\B,t),
\]
see Papadima-Suciu \cite[Cor.~9.7]{PS}; and (ii) that all irreducible components of $\RR^1(G)$ are linear subspaces of $H^1(G;\k)$, see Dimca-Papadima-Suciu \cite[Thm.~B]{DPS}.
Assume that these components of $\RR^1(G)$ are $0$-isotropic, projectively disjoint, and reduced.  

Let $L$ be an irreducible component of $\RR^1(G)$, and let $\{l_1,\dots,l_m\}$ be a basis for $L$, where the $l_i$ are linearly independent elements of $A^1=E^1=H^1(G;\k)$.  Since $L$ is $0$-isotropic, we have $l_i \wedge l_j=0$ in $A^2$ for $1\le i<j\le m$.  Consequently, the ideal 
\begin{equation} \label{eq:ILdef}
I_L = \langle l_i \wedge l_j \mid 1\le i<j\le m\rangle 
\end{equation}
is a subideal of the ideal $I$ generated by 
$\ker(\mu\colon E^1 \wedge E^1 \to A^2)$, where $A^2=H^2(X;\k)$ and $X$ is the presentation $2$-complex corresponding to an $n$-generator, commutator-relators presentation of $G$ as in the previous section.  Corresponding to $L$, we have a ``local'' linearized Alexander invariant $\B_L$, presented in analogy with \eqref{eq:smalllinalexpres} by
\begin{equation} \label{eq:BBqpres}
I^3_L\otimes S \xrightarrow{\ \partial_3^L\ } I^2_L\otimes S \longrightarrow \B_L \longrightarrow 0,
\end{equation}
where $\partial_k^L$ is dual to the map $\delta^k_{I_L}\colon I_L^{k-1} \otimes S \to I_L^k \otimes S$ given by the restriction to $I_L$ of multiplication by $\sum_{i=1}^n x_i e_i$.

The inclusion $I_L \subseteq I$ yields a short exact sequence of cochain complexes of free $S$-modules
\begin{equation} \label{eq:IIq}
 \begin{CD}
\dots @<<<I^4/I_L^4 \otimes S @<{\psi^4}<< I^3/I_L^3 \otimes S @<{\psi^3}<< I^2/I_L^2 \otimes S \\
@.  @AAA      @AAA             @AAA                      \\
\dots @<<<I^4\otimes S  @<{\delta^4_I}<<I^3\otimes S  @<{\delta^3_I}<< I^2\otimes S \\
@.  @AAA       @AAA            @AAA                      \\
\dots @<<<I_L^4\otimes S  @<{\delta^4_{I_L}}<<I_L^3\otimes S  @<{\delta^3_{I_L}}<< I_L^2\otimes S,
 \end{CD}
\end{equation}
where $\psi$ is the induced map. Dualizing and passing to homology yields a surjection
\begin{equation}\label{eq:SurjBB} 
\pi_L \colon \B = \coker(\partial^I_3) \twoheadlongrightarrow \coker(\partial_3^L)= \B_L.
\end{equation}

Let $L_1,\dots,L_k$ be the irreducible components of the resonance variety $\RR^1(G)$, 
and define $\pi \colon \B \to \bigoplus_{i=1}^k \B_{L_i}$ by $\pi =  \bigoplus_{i=1}^k \pi_{L_i}$.  This yields an exact sequence
\begin{equation}
\label{eq:BBq}
0 \longrightarrow {\mathfrak K} \longrightarrow \B \xrightarrow{\ \pi\ }  \bigoplus_{i=1}^k \B_{L_i} \longrightarrow {\mathfrak C} \longrightarrow 0.
\end{equation}
Let $\mathfrak m=\langle x_1,\dots,x_n\rangle$ be the maximal ideal in the polynomial ring $S$.  We will show that the modules ${\mathfrak K}=\ker \pi$ and $ {\mathfrak C}=\coker \pi$ are supported only at $\mathfrak m$.

For $1\le j \le k$, let $\mathfrak q_j=I(L_j)$ be the prime ideal in $S$ with $V(\mathfrak q_j)=L_j$. 
Since localization is 
an exact functor, the sequence  \eqref{eq:BBq} remains exact after localizing at any ideal ${\mathfrak q}$ such that $V({\mathfrak q}) \subset \RR^1(G)$. The assumption that the components of $\RR^1(G)$ are projectively disjoint implies that $(\B_{L_i})_{{\mathfrak q}_j}=0$
if $i \ne j$.  This implies that $ {\mathfrak C}_{\mathfrak q_j}=0$ for $1\le j \le k$.  For an embedded prime $\mathfrak p$ for $\RR^1(G)$, different from $\mathfrak m$, the same argument shows that $ {\mathfrak C}_{\mathfrak p}=0$.  Thus $ {\mathfrak C}$ can be supported
only at the maximal ideal. In the case  
where $G$ is an arrangement group, this yields the lower bound on the 
Chen ranks found in \cite{SS2}; however the proof there relies on a result of 
Eisenbud-Popescu-Yuzvinsky \cite{EPY} which is special to the case of arrangements. 

To establish Theorem \ref{thm:main}, it remains to show that the kernel ${\mathfrak K}$ in \eqref{eq:BBq} is supported only at the maximal ideal.  Let $L=V(\mathfrak q)$ be an $m$-dimensional irreducible component of $\RR^1(G)$.  Given $L\subset\RR^1(G)$, let $J_L$ be the ideal in the exterior algebra $E$ with generating set $\{g \in I^2 \mid l\wedge g \in I_L\ \forall l \in L\}$.  If $L$ has basis $\{l_1,\dots,l_m\}$, then from the definition \eqref{eq:ILdef} of $I_L$, we have $I_L: \langle l_1,\dots,l_m\rangle = \langle l_1,\dots,l_m\rangle$ in $E$. 
Moreover, since $J_L = (I_L: \langle l_1,\dots,l_m\rangle) \cap I$, 
we have 
\begin{equation} \label{eq:intersect}
J_L = \langle l_1,\dots,l_m\rangle \cap I.
\end{equation}
\begin{prop} \label{prop:crux} The ideals $I_L$ and $J_L$ are equal if and only if 
$(\B)_{\mathfrak q} \simeq  (\B_{L})_{\mathfrak q}$.
\end{prop}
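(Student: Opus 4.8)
The plan is to translate the statement about the localized modules into a statement about the kernel of the canonical surjection $\pi_L\colon\B\twoheadrightarrow\B_L$ of \eqref{eq:SurjBB}, and then to compute that kernel at $\mathfrak q$ in terms of the ``defect'' $J_L/I_L$. First, since $\pi_L$ is surjective and $\B,\B_L$ are finitely generated over the Noetherian ring $S$, localizing at $\mathfrak q$ gives a surjection $\B_{\mathfrak q}\twoheadrightarrow(\B_L)_{\mathfrak q}$; if the two sides happen to be abstractly isomorphic this surjection is an isomorphism, since a surjective endomorphism of a Noetherian module is injective, and conversely an isomorphism $(\pi_L)_{\mathfrak q}$ certainly gives $\B_{\mathfrak q}\simeq(\B_L)_{\mathfrak q}$. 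So the claim is equivalent to: $(\ker\pi_L)_{\mathfrak q}=0$ if and only if $I_L=J_L$. Moreover $I_L$ and $J_L$ are both generated in degree $2$, and $J_L$ is generated by degree-$2$ elements lying in $I^2$, so $I_L=J_L$ if and only if $I_L^2=J_L^2$, i.e.\ if and only if the graded vector space $J_L/I_L$ vanishes.

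To get at $\ker\pi_L$, dualize the short exact sequence of cochain complexes \eqref{eq:IIq}; as all terms are free over $S$ this yields a short exact sequence of chain complexes
\[
0\longrightarrow\bigl((I/I_L)\otimes S,\partial^{I/I_L}\bigr)\longrightarrow\bigl(I\otimes S,\partial^{I}\bigr)\longrightarrow\bigl(I_L\otimes S,\partial^{L}\bigr)\longrightarrow 0 ,
\]
whose $H_2$-level map is $\pi_L$. Since the complex $(I\otimes S,\partial^I)$ is concentrated in homological degrees $\ge2$ and $\pi_L$ is surjective, the associated long exact sequence (equivalently, the snake lemma applied to the two-term presentations \eqref{eq:smalllinalexpres} and \eqref{eq:BBqpres}) identifies $\ker\pi_L$ with the image of $H_2\bigl((I/I_L)\otimes S\bigr)$ in $\B$, that is, with the cokernel of the connecting map $H_3(I_L\otimes S,\partial^L)\to H_2\bigl((I/I_L)\otimes S\bigr)$. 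Everything is thus reduced to understanding the complex of the $E$-module $I/I_L$, localized at $\mathfrak q$.

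Now invoke \eqref{eq:intersect}: writing $\mathfrak L=\langle l_1,\dots,l_m\rangle$, we have $I_L=\mathfrak L^2$ and $J_L=\mathfrak L\cap I$, so there is an exact sequence of $E$-modules $0\to J_L/I_L\to I/I_L\to I/J_L\to 0$ with $J_L/I_L\subseteq\mathfrak L/\mathfrak L^2$ and $I/J_L\hookrightarrow E/\mathfrak L$. Choose coordinates with $l_i=e_i$ for $i\le m$ and $\mathfrak q=(x_{m+1},\dots,x_n)$. Then both $\mathfrak L/\mathfrak L^2$ and $E/\mathfrak L$ are annihilated by $l_1,\dots,l_m$, hence are modules over the ``transverse'' exterior algebra $E/\mathfrak L$, and the Koszul-type differential $\sum_i x_ie_i$ acts on them through $\sum_{k>m}x_k\bar e_k$ alone; consequently the $\partial$-complexes of $J_L/I_L$ and of $I/J_L$ are extended from the polynomial subring $S'=\k[x_{m+1},\dots,x_n]$, so their homology is supported on $L$ and, being graded, vanishes after localizing at $\mathfrak q$ precisely when it vanishes over $S'$. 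The piece $I/J_L$ is, up to extension of scalars from $S'$, the kernel of $E/\mathfrak L\to A/\mathfrak L A$, and $H_2\bigl((I/J_L)\otimes S\bigr)$ is (again up to that extension) the linearized Alexander invariant of the quotient algebra $A/\mathfrak L A$; one shows this contributes nothing to $\ker\pi_L$ at $\mathfrak q$, so that $(\ker\pi_L)_{\mathfrak q}$ is governed entirely by the piece $J_L/I_L$, and is zero exactly when $J_L/I_L=0$, i.e.\ when $I_L=J_L$.

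The main obstacle is the last assertion: that the ``transverse resonance'' contributed by $I/J_L$ dies at the generic point of $L$, while the piece $J_L/I_L$, when nonzero, genuinely survives into $\ker\pi_L$ and is not absorbed by the connecting map $H_3(I_L\otimes S,\partial^L)\to H_2\bigl((I/I_L)\otimes S\bigr)$. This is where the hypotheses that $L$ is an irreducible \emph{component} of $\RR^1(G)$ which is $0$-isotropic and projectively disjoint from the remaining components are used in an essential way; making it precise requires splitting the differential as $\sum_i x_ie_i=\sum_{i\le m}x_il_i+\sum_{k>m}x_ke_k$ into its parts along and transverse to $L$ (the first having unit coefficients over $S_{\mathfrak q}$) and tracking the resulting block structure of the presentations \eqref{eq:smalllinalexpres} and \eqref{eq:BBqpres} through the localization at $\mathfrak q$.
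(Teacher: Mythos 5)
Your overall framework is sound and matches the paper's: reduce the proposition to the claim that $(\ker\pi_L)_{\mathfrak q}=0$ if and only if $I_L=J_L$, use the long exact sequence of the dual of \eqref{eq:IIq} to identify $\ker\pi_L$ with (a quotient of) $H_2$ of the dual of $(I/I_L)\otimes S$, and exploit the fact that the differential acts on $E$-modules annihilated by $\langle l_1,\dots,l_m\rangle$ only through $x_{m+1},\dots,x_n$. The Noetherian argument that an abstract isomorphism forces $(\pi_L)_{\mathfrak q}$ itself to be an isomorphism is also a worthwhile point that the paper leaves implicit. However, the two statements that actually carry the proposition are asserted rather than proved, and you acknowledge as much in your final paragraph. (i) For the forward direction you need that, when $I_L=J_L$, the $H_2$ of the dual of $(I/J_L)\otimes S$ contributes nothing to $\ker\pi_L$ after localizing at $\mathfrak q$. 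This does not follow from the observation that the complex is extended from $S'=\k[x_{m+1},\dots,x_n]$: a nonzero graded $S'$-module does \emph{not} vanish upon localizing at $\mathfrak q$, so something must be computed. The paper's computation is the determinant trick: choose representatives $f_1,\dots,f_r$ of a basis of $I^2/I_L^2$ whose initial terms are distinct monomials $e_ie_j$ with $m<i<j$ (this is precisely where $I_L=J_L$ is used), and show that the $r\times r$ submatrix of $\psi^3$ on the rows $e_1f_1,\dots,e_1f_r$ equals $x_1\cdot\id+M'$ with $M'\in\k[x_2,\dots,x_n]^{r\times r}$, so its determinant is a unit in $S_{\mathfrak q}$ and $\coker((\psi^3)^*)_{\mathfrak q}=0$.

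(ii) For the reverse direction you need that a nonzero class in $J_L/I_L$ genuinely survives into $(\ker\pi_L)_{\mathfrak q}$, i.e., is not absorbed by the connecting map $H_3$ of the dual of $I_L\otimes S$ nor by interaction with the $I/J_L$ piece of your filtration; you flag this as "the main obstacle" but do not resolve it. The paper sidesteps this bookkeeping with a different device: take an indecomposable $g\in J_L^2\setminus I_L^2$ (indecomposability uses projective disjointness), set $K_L=I_L+g$, and build an explicit free resolution of $\B_{K_L}$ as the mapping cone of a chain map $\bigwedge^\bullet L^*\otimes S\to(I_L^\bullet\otimes S)$ given by multiplication by $\omega'g$. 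This produces the exact sequence $0\to S/{\mathfrak q}\to\B_{K_L}\to\B_L\to 0$ of \eqref{eq:KLseq}; since $\pi_L$ factors through $\B_{K_L}$ and $(S/{\mathfrak q})_{\mathfrak q}\neq 0$, localization finishes the argument. As it stands your proposal is a plan rather than a proof, and I do not see a way to complete steps (i) and (ii) that avoids computations essentially equivalent to the paper's.
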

\begin{proof}
Choose a basis for $E^1$ so 
that $L=\spn\{e_1,\dots,e_m\}$ and $I_L$ is consequently generated by $e_i e_j:=e_i \wedge e_j$, $1\le i<j\le m$. 

First, suppose $I_L=J_L$. Since $\langle e_1,\dots,e_m\rangle \cap I= I_L$, 
we may choose (independent) elements $\{f_1,\ldots,f_r\}$ in $E^2$ whose images form a basis for the quotient 
$I^2/I^2_L$, and whose initial terms $\In(f_i) = g_i$ are distinct elements of the ideal generated by
$e_{i}e_j$, $m<i<j\le n$. Note that $\{e_1f_1,\ldots, e_1f_r\}$ are independent in $I^3$, for if
not, then 
\[
\sum\limits_{i=1}^r c_ie_1f_i = e_1 \sum\limits_{i=1}^r c_if_i = 0,
\]
so $\sum_{i=1}^r c_if_i \in \langle e_1\rangle$. Since
the initial terms $\In(f_i)=g_i$ are distinct, $\sum_{i=1}^r c_if_i = 0$ is impossible, 
and $\sum_{i=1}^r c_if_i$ cannot be a nontrivial multiple of $e_1$ 
for the same reason. 

Recall from \eqref{eq:IIq} that $\psi^3\colon  I^2/I^2_L\otimes S \to I^3/I^3_L\otimes S$ 
denotes the map induced by $\delta^3_I\colon I^2\otimes S \to I^3\otimes S$ given by multiplication by $\sum_{i=1}^n x_i e_i$.  Let $M$ be the submatrix of (the matrix of) $\psi^3$ with rows corresponding to $e_1f_1,\ldots, e_1f_r \in I^3$.
Then
\[
M = x_1 \cdot \id + M', \mbox{ with } M' \in \k[x_2,\ldots,x_n]^{r \times r},
\]
where $\id$ is the identity matrix of size $r=\dim I^2/I^2_L$. To see this, note that 
\[
\delta^3_I(f_j) = x_1e_1f_j + \sum\limits_{i=2}^n x_ie_if_j.
\]
While there can be syzygies with $e_if_j = e_1f_k + \cdots$, for such a relation we have $i \ge 2$,
hence only $x_i$ with $i \ge 2$ will appear. Thus, $\det(M) = x_1^r +h$, with $\deg_{x_1}(h) < r$.

In the localization $S_{\mathfrak q}$, since $x_1 \not\in \mathfrak q = \langle x_{m+1},\ldots x_n \rangle$, 
$\det(M)$ is a unit. This implies that the maximal Fitting ideal 
$\mathrm{Fitt}_0((\psi^3)_{\mathfrak q})$ 
contains
a unit. Consequently, after localizing, the cokernel of $\psi^3$ vanishes.  From the long exact sequence arising from the dual of \eqref{eq:IIq},
this cokernel is the kernel of the map $(\B)_{\mathfrak q} \rightarrow (\B_L)_{\mathfrak q}\rightarrow 0$, so the map $\pi_L$ of \eqref{eq:SurjBB} induces an isomorphism 
$(\B)_{\mathfrak q} \simeq (\B_L)_{\mathfrak q}$.

Now suppose the irreducible component $L$ of $\RR^1(G)$ is such that $I_L \ne J_L$. Let $g \in (J^2_L \setminus I^2_L)$, assume as 
above that $L$ has basis $\{ e_1,\dots,e_m \}$, and write $L^*=\spn\{ e_{m+1}, \ldots, e_n\}$. Note that $g$ is indecomposable, 
for if $g = l_1 \wedge l_2$ with $l_1 \in L$, $l_2 \in L^*$, then the plane $\spn\{l_1,l_2\} \subseteq R^1(G)$ intersects $L$ in a line,  contradicting the assumption that the components of $\RR^1(G)$ are projectively disjoint.  

Denote the ideal $I_L+g$ in the exterior algebra $E$ by $K_L$, 
and let $\delta^q_{K_L}\colon K_L^{q-1} \otimes S \to K_L^q \otimes S$ be multiplication by $\omega=\sum_{i=1}^n x_i e_i$. 
Let $\B_{K_L}$ be the $S$-module presented by the dual $\partial_3^{K_L}$ of $\delta^3_{K_L}$,
\begin{equation} \label{eq:BKLpres}
I^3_{K_L} \otimes S \xrightarrow{\ \partial_3^{K_L}\ } I^2_{K_L} \otimes S \longrightarrow \B_{K_L}.
\end{equation}
We will construct a free resolution ${\bf K}_\bullet$ of $\B_{K_L}$.

First, consider the cochain complex ${\bf C}^\bullet=(I^\bullet_{L} \otimes S,\delta^\bullet_L)$ corresponding to the ideal $I_L$ in $E$.  This complex is acyclic.  To see this, let ${\bf C}^\bullet(0)$ denote the Koszul (cochain) complex $\bigwedge^* L \otimes S$, with differential given by multiplication by $\omega'=\sum_{i=1}^m x_i e_i$.  Multiplication by $x_{m+1}$ induces a chain map ${\bf C}^\bullet(0) \to {\bf C}^\bullet(0)$.  Let ${\bf C}^\bullet(1)$ be the corresponding mapping cone.  Since ${\bf C}^\bullet(0)$ is acyclic, so is ${\bf C}^\bullet(1)$.  Continuing in this manner, multiplication by $x_{m+j+1}$ induces a chain map ${\bf C}^\bullet(j) \to {\bf C}^\bullet(j)$, and the resulting mapping cone ${\bf C}^\bullet(j+1)$ is acyclic.  Thus, ${\bf C}^\bullet={\bf C}^\bullet(n-m)$ is acyclic.
Let ${\bf C}_\bullet=(I^\bullet_{L} \otimes S,\partial_\bullet^L)$ be the chain complex dual to ${\bf C}^\bullet$.  Since ${\bf C}^\bullet$ is acyclic and free, the dual complex ${\bf C}_\bullet$ gives a free resolution of the module $\B_L$,
\begin{equation} \label{eq:BLres}
\cdots \longrightarrow I^k\otimes S\xrightarrow{\ \partial_k^L\ } \cdots \longrightarrow I^3\otimes S \xrightarrow{\ \partial_3^L\ } I^2\otimes S \longrightarrow \B_L.
\end{equation}

Let $\widehat{\bf C}^\bullet$ be the Koszul complex $\bigwedge^\bullet L^* \otimes S$, with differential given by multiplication by $-\omega''=-\sum_{i=m+1}^n x_i e_i$.  Multiplication by
$\omega' g=\sum_{i=1}^m x_i e_ig$ induces a chain map $\widehat{\bf C}^\bullet \to {\bf C}^\bullet$.
Let ${\bf K}^\bullet$ be the mapping cone of this chain map. Then we have a short exact sequence of cochain complexes $0\to {\bf C}^\bullet \to {\bf K}^\bullet \to \widehat{\bf C}^\bullet \to 0$.  Since ${\bf C}^\bullet$ and $\widehat{\bf C}^\bullet$ are acyclic and free, ${\bf K}^\bullet$ is acyclic and free. 
It is readily checked that the map $\delta^3_{K_L}\colon  I^2_{K_L} \otimes S \to  I^3_{K_L} \otimes S$, $f\mapsto \omega f=\omega' f+\omega'' f$, coincides with the first differential of ${\bf K}^\bullet$.  It follows that the chain complex ${\bf K}_\bullet$ dual to ${\bf K}^\bullet$ is a free resolution of $\B_{K_L}$.

Recall that ${\mathfrak q}=\langle x_{m+1},\dots,x_n\rangle$. The above considerations yield a short exact sequence of chain complexes 
$0\to \widehat{\bf C}_\bullet \to {\bf K}_\bullet \to {\bf C}_\bullet \to 0$, where $\widehat{\bf C}_\bullet$ is dual to $\widehat{\bf C}^\bullet$, and is a free resolution of $S/{\mathfrak q}$.   We thus have a short exact sequence of $S$-modules
\begin{equation} \label{eq:KLseq}
0 \longrightarrow S/{\mathfrak q} \longrightarrow \B_{K_L} \longrightarrow \B_L \longrightarrow 0.
\end{equation}
Since the localization $(S/{\mathfrak q})_{\mathfrak q}$ is nontrivial, we have 
$(\B_{K_L})_{\mathfrak q} \not\simeq (\B_L)_{\mathfrak q}$.  The surjection $\pi_L\colon \B \to \B_L$ of \eqref{eq:SurjBB} factors through $\B_{K_L}$, so the fact that 
$(\B_{K_L})_{\mathfrak q} \not\simeq (\B_L)_{\mathfrak q}$ implies that 
$(\B)_{\mathfrak q} \not\simeq (\B_L)_{\mathfrak q}$, which completes the proof.
\end{proof}

Proposition \ref{prop:crux} implies that if $I_{L}=J_{L}$ for each irreducible component $L$ of $\RR^1(G)$, then 
the modules $\mathfrak K=\ker\pi$ and $\mathfrak C=\coker\pi$ in the exact sequence \eqref{eq:BBq} are of finite length, and hence the Hilbert polynomials of the modules $\B$ and $\bigoplus_{i=1}^k \B_{L_i}$ are equal.  If $\dim L=m$, a straightforward exercise using the resolution \eqref{eq:BLres} shows that the Hilbert polynomial of $\B_L$ is $(k-1)\binom{m+k-2}{k}$.  Consequently, to complete the proof of Theorem \ref{thm:main}, it suffices to prove the following.

\begin{prop} \label{prop:crux2} The ideals $I_L$ and $J_L$ are equal if and only if the irreducible component $L$ of $\RR^1(G)$ is reduced.
\end{prop}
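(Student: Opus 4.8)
The plan is to turn the statement into a length comparison at the prime $\mathfrak q=I(L)$ of $S$, and then feed the outcome into Proposition~\ref{prop:crux}. Since $L$ is an irreducible component of $\RR^1(G)=V(\ann\B)$, the ideal $\mathfrak q$ is a minimal prime of $\ann\B$, so $(S/\ann\B)_{\mathfrak q}$ is Artinian local with residue field $F=\k(\mathfrak q)$, and ``$L$ is reduced'' means exactly that $(\ann\B)_{\mathfrak q}=\mathfrak q_{\mathfrak q}$, i.e.\ that $\mathfrak q_{\mathfrak q}$ annihilates $(\B)_{\mathfrak q}$. By Proposition~\ref{prop:crux} and its proof, $I_L=J_L$ holds if and only if the localized surjection $\rho:=(\pi_L)_{\mathfrak q}\colon(\B)_{\mathfrak q}\twoheadrightarrow(\B_L)_{\mathfrak q}$ of \eqref{eq:SurjBB} is an isomorphism. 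So it will suffice to prove: $\rho$ is an isomorphism $\iff$ $\mathfrak q_{\mathfrak q}$ kills $(\B)_{\mathfrak q}$.

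First I would record two facts. (A) $(\B_L)_{\mathfrak q}$ is an $F$-vector space of dimension $m-1$ annihilated by $\mathfrak q_{\mathfrak q}$: indeed $\B_L$ is the ``local'' Alexander invariant faithfully supported on $L$, and the multiplicity read off from its Hilbert polynomial $(k-1)\binom{m+k-2}{k}$ (identified above via the resolution \eqref{eq:BLres}) is $m-1$; comparing this length with $\dim_F(\B_L\otimes_S F)$, computed below to be $m-1$ as well, forces $\mathfrak q_{\mathfrak q}(\B_L)_{\mathfrak q}=0$. (B) $\dim_F(\B\otimes_S F)=m-1$, proved below. Granting these, one implication is immediate: if $\rho$ is an isomorphism then $(\B)_{\mathfrak q}\cong(\B_L)_{\mathfrak q}$ is killed by $\mathfrak q_{\mathfrak q}$, so $\mathfrak q_{\mathfrak q}\subseteq\ann_{S_{\mathfrak q}}(\B)_{\mathfrak q}=(\ann\B)_{\mathfrak q}\subseteq\mathfrak q_{\mathfrak q}$ (the last inclusion since $\ann\B\subseteq\mathfrak q$), whence $L$ is reduced. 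Conversely, if $L$ is reduced then $(\B)_{\mathfrak q}=\B\otimes_S F$ has $F$-dimension $m-1=\dim_F(\B_L)_{\mathfrak q}$, and the surjection $\rho$ between $S_{\mathfrak q}$-modules of equal finite length $m-1$ must be an isomorphism.

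So the work is in computing the fiber $\B\otimes_S F$ (and likewise $\B_L\otimes_S F$). Base-changing the exact sequence of chain complexes $0\to(A\otimes S,\partial^A)\to(E\otimes S,\partial^E)\to(I\otimes S,\partial^I)\to 0$ from Section~\ref{sec:setup} along $S\to F$, the middle term becomes the Koszul complex of $F$ on $(x_1,\dots,x_m,0,\dots,0)$; since $x_1,\dots,x_m$ are units in the field $F$, this complex is acyclic, hence $\B\otimes_S F=H_2(I\otimes F,\partial^I)\cong H_1(A\otimes F,\partial^A)$, which by $F$-duality has dimension $\dim_F H^1(A\otimes F,a_{\mathfrak q})$, where $a_{\mathfrak q}=\sum_{i=1}^m x_ie_i\in A^1\otimes F$ is the ``generic point'' of $L$. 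I would then show $\ker\bigl(a_{\mathfrak q}\colon A^1\otimes F\to A^2\otimes F\bigr)=L\otimes F$: the inclusion $\supseteq$ uses that $L$ is $0$-isotropic (for $l\in L$, $a_{\mathfrak q}\wedge l\in\bigwedge^2 L\otimes F=I_L^2\otimes F\subseteq I^2\otimes F$, so $a_{\mathfrak q}l=0$ in $A^2\otimes F$); for $\subseteq$, a kernel element outside $L\otimes F$ would, after subtracting its $L$-component, produce a nonzero $v$ in a complement of $L$ with $a_{\mathfrak q}\wedge v\in I^2\otimes F$, so that $\spn_F\{a_{\mathfrak q},v\}$ is a $2$-dimensional $0$-isotropic subspace of $A^1\otimes F$ --- hence (as over $\k$) contained in the resonance locus over $F$, namely $\bigcup_i(L_i\otimes F)$ --- which meets $L\otimes F$ in the line $\spn_F\{a_{\mathfrak q}\}$ without being contained in it, contradicting the projective disjointness of the (linear, hence base-change-stable) components $L_i$. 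Since $\im\bigl(a_{\mathfrak q}\colon A^0\otimes F\to A^1\otimes F\bigr)=F\cdot a_{\mathfrak q}$ is $1$-dimensional, $\dim_F H^1(A\otimes F,a_{\mathfrak q})=m-1$; the identical computation with $A_L=E/I_L$ in place of $A$ (where $0$-isotropy of $L$ is automatic and no disjointness is needed) yields $\dim_F(\B_L\otimes_S F)=m-1$, which is what was used in (A).

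The main obstacle is this last step: identifying $\ker\bigl(a_{\mathfrak q}\colon A^1\otimes F\to A^2\otimes F\bigr)$ with $L\otimes F$, equivalently showing $\dim_F H^1(A\otimes F,a_{\mathfrak q})=\dim L-1$. This is the only place where both remaining hypotheses are needed (the inclusion $\supseteq$ consumes $0$-isotropy, the inclusion $\subseteq$ consumes projective disjointness), and it requires a little care about the base change to the non-algebraically-closed field $F=\k(\mathfrak q)$. One can argue directly over $F$ as above, or sidestep $F$ by evaluating instead at a sufficiently general $\k$-point $a$ of $L$ --- where the same kernel computation gives $\dim_\k H^1(A,a)=m-1$ --- and transferring this generic fiber dimension back to $\B\otimes_S F$ by upper semicontinuity of fiber dimension.
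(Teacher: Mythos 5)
Your proof is correct, and it takes a genuinely different route from the paper's. The paper's argument for Proposition~\ref{prop:crux2} is explicit and matrix-theoretic: it writes down the presentation matrix $[\partial_3^L]$ to verify directly that $\ann(\B_L)=\mathfrak q$ (giving the ``reduced'' direction via Proposition~\ref{prop:crux} and the compatibility of annihilators with localization), and for the converse it forms $K_L=I_L+g$, writes out $[\partial_3^{K_L}]$, and runs a hands-on elimination to exhibit the class $[e_1e_2]\in\B_{K_L}$ not killed by $x_{m+1}$, feeding this back through the short exact sequence \eqref{eq:KLseq}. You instead replace all of this with a length-and-fiber-dimension comparison at $\mathfrak q$: you compute the generic fiber $\dim_F(\B\otimes_S F)=m-1$ directly from the resonance geometry (the kernel computation $\ker(a_{\mathfrak q})=L\otimes F$, where $0$-isotropy gives $\supseteq$ and projective disjointness gives $\subseteq$), pin down $\text{length}((\B_L)_{\mathfrak q})=m-1$ by squeezing it between the multiplicity from \eqref{eq:BLres} and the fiber dimension, and then invoke Proposition~\ref{prop:crux} together with the elementary fact that a surjection of finite-length modules of equal length is an isomorphism. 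What your approach buys is conceptual clarity --- no auxiliary ideal $K_L$, no basis normalization $g=\sum e_ie_{m+i}$, no Gaussian-elimination-style matrix chase --- and it isolates exactly where each hypothesis is consumed in the fiber computation. What it costs is some care with the base change to the non-algebraically-closed residue field $F=\k(\mathfrak q)$, which you flag, and which does require spelling out (the identification $\B\otimes_S F\cong H_1(A\otimes F)$ via the long exact sequence and the acyclicity of $E\otimes F$, the identification of $\RR^1$ over $F$ with $\bigcup(L_i\otimes F)$, and the fact that $a_{\mathfrak q}$ lies in $L_i\otimes F$ only for $L_i=L$). One small place where I'd tighten the wording: in (A) you cannot read off $\text{length}((\B_L)_{\mathfrak q})=m-1$ from the multiplicity alone without first knowing that $\mathfrak q$ is the only top-dimensional minimal prime of $\ann(\B_L)$; the clean way to phrase it, as I read your intent, is the two-sided bound $m-1 = e(\B_L) \ge \text{length}((\B_L)_{\mathfrak q}) \ge \dim_F(\B_L\otimes_S F) = m-1$, which forces equality and the killing by $\mathfrak q_{\mathfrak q}$ in one stroke.
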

\begin{proof}
We continue with the notation established in the proof of Proposition \ref{prop:crux}. 
Let $L=\spn\{e_1,\dots,e_m\}$ be an $m$-dimensional irreducible component of $\RR^1(G)$, and $\mathfrak q=I(L)$ the prime ideal in $S$ with $V(\mathfrak q)=L$.  The component $L$ of $\RR^1(G)$ is reduced if in the primary decomposition
\[
\ann(\B) = \bigcap Q_i \mbox{ with } \sqrt{Q_i} = \mathfrak{q}_i,
\]
the $\mathfrak{q}$-primary component is equal to $\mathfrak{q}$. 

Assume that $I_L=J_L$. 
For simplicity, 
bigrade $E$, viewing elements of $L$ as of bidegree $(1,0)$ and elements of
$L^*=\spn\{e_{m+1},\dots,e_n\}$ as of bidegree $(0,1)$.  Order monomial bases for 
$I^2_L$ and $I_L^3$ in a grading where $(1,0) > (0,1)$ (e.g., lex order).  Then, 
the map $\partial_3^L\colon I_L^3\otimes S \to I_L^2\otimes S$ of \eqref{eq:BBqpres} presenting the module $\B_L$ has matrix
\begin{equation*} \label{eq:BBqmatrix}
[\partial_3^L]=\left[\begin{matrix}
d^L_2(x_1,\ldots,x_m) & x_{m+1}\cdot \id & x_{m+2}\cdot \id & \cdots & x_n\cdot \id
\end{matrix}\right] = \left[\begin{matrix} d^L_2 & X\end{matrix}\right],
\end{equation*} 
where $d^L_2(x_1,\ldots,x_m)$ is the second Koszul differential on $x_1,\ldots, x_m$, $\id$ is the identity matrix of size $|I_L^2|=\binom{m}{2}$, and $X=\left[\begin{matrix}
x_{m+1}\cdot \id &  \cdots & x_n\cdot \id \end{matrix}\right]$.  Using this, one can check that 
$\ann(\B_L)=\langle x_{m+1},\ldots,x_n\rangle = \mathfrak{q}$.  
Since $(\B)_{\mathfrak q} \simeq (\B_L)_{\mathfrak q}$ when $I_L=J_L$ by Proposition \ref{prop:crux}, and localization commutes with taking annihilators, it follows that $L$ is reduced in this instance.

For the other direction, we show that $I_L \ne J_L$ implies that $L$ is not reduced. 
Let $g$ be an indecomposable element in $J^2_L \setminus I^2_L$, and let $K_L=I_L+g$.
Changing bases in $L$ and $L^*=\spn\{e_{m+1},\dots,e_n\}$, we may assume that 
$g=e_1 e_{m+1}+\dots+e_k e_{m+k}$ 
for some $k$, $2\le k\le m$.  To see that $L$ is not reduced, it suffices to exhibit an element $\beta$ in the module $\B_{K_L}$ which is not annihilated by 
${\mathfrak q}=\langle x_{m+1},\ldots, x_n\rangle$.

Choose ordered bases for $K_L^2$ and $K_L^3$ 
whose initial segments are the bases of $I_L^2$ and $I_L^3$ above, so that $g$ appears
last in the basis for $K^2_{L}$, and $g \wedge L^*$ is the final segment of the basis
for $K^3_{L}$.  With respect to these ordered bases (in light of the mapping cone construction in the proof of Proposition \ref{prop:crux}),  the map $\partial_3^{K_L}\colon K_L^3\otimes S \to K_L^2\otimes S$ of \eqref{eq:BKLpres} presenting $\B_{K_L}$ has matrix
\begin{equation*} \label{eq:BKLmatrix}
[\partial_3^{K_L}]=\left[\begin{matrix} d_2^L & X & 0\\ 0 & \bf{y} & \bf{x}\end{matrix}\right],
\end{equation*}
where ${\bf x} = [x_{m+1}\ \cdots\ x_n]$.  As $\partial_3^{K_L}$ is dual to the multiplication map
$\delta^3_{K_L}\colon K_L^2\otimes S \to K_L^3\otimes S$,  the last row of 
$[\partial_3^{K_L}]$ corresponds to $\delta^3_{K_L}(g) = (\sum_{i=1}^n x_i e_i)\wedge g$.  
Using this, and $g=\sum_{i=1}^k e_i e_{m+i}$, one can check that the the entries of $\bf{y}$ are in $\k[x_1,\dots,x_m]$. 

Let 
$\beta$ be the class of
 $e_1  e_2\in K_L^2\otimes S$ in  $\B_{K_L}$, and 
assume that $x_{m+1} \beta = 0$, i.e., that $x_{m+1}e_1e_2\in \im(\partial_3^{K_L})$. Then there exists $u \in K_L^3 \otimes S$ so that 
$\partial_3^{K_L}(u)=x_{m+1}e_1e_2$.  From the form of the matrix of $\partial_3^{K_L}$ above, we must have $u=e_1e_2e_{m+1}+v$ for some $v \in K_L^3 \otimes S$.  Since the coefficient of $e_1e_2 e_{m+1}$ in $\delta^3_{K_L}(g)$ is $-x_2$, the transpose of the column of $[\partial_3^{K_L}]$ corresponding to $e_1 e_2 e_{m+1}$ is 
$\left[\begin{matrix} x_{m+1} & 0 & \cdots & 0 & -x_2\end{matrix}\right]$.  In other words, 
$\partial_3^{K_L}(e_1e_2e_{m+1})=x_{m+1}e_1e_2-x_2g$.  Thus, 
$\partial_3^{K_L}(v)=\partial_3^{K_L}(u-e_1e_2e_{m+1})=x_2g$, and the assumption that 
$x_{m+1}e_1 e_2\in \im(\partial_3^{K_L})$ implies that $x_2 g\in \im(\partial_3^{K_L})$ as well. 
This in turn implies that the kernel of the map $\B_{K_L} \twoheadrightarrow \B_L$ is annihilated by $x_2$, a contradiction since this kernel is $S/{\mathfrak q}=S/\langle x_{m+1},\dots,x_n\rangle$, see \eqref{eq:KLseq}.
\end{proof}

\begin{example} \label{ex:notred2}
For the group $G$ in Example \ref{ex:bad}(d), $I=\langle e_1e_2,e_1e_3+e_2e_4 \rangle$, and $\RR^1(G)=L=\spn\{e_1,e_2\}$, so that $I_L=\langle e_1e_2\rangle$.  Since $J_L=I \neq I_L$, the resonance component $L$ is not reduced.  A calculation reveals that the Chen ranks of $G$ are given by $\theta_k(G)=2(k-1)$, which of course differ from $\theta_k(F_2)=k-1$ for $k\ge 2$.
\end{example}

\section{Basis-conjugating automorphism groups}
As a first application of Theorem \ref{thm:main}, we compute the Chen ranks of the basis-conjugating automorphism group $G=\PS_n$, proving Theorem \ref{thm:mccoolchen}.

Let $F_n$ be the free group generated by $x_1,\dots,x_n$.  The basis-conjugating automorphism group $\PS_n$ is the group of all automorphisms of $F_n$ which send each generator $x_i$ to a conjugate of itself.  As noted in the introduction, this group may be realized as the group of motions of a collection of $n$ unknotted, unlinked oriented circles in $3$-space, where each circle returns to its original position.   McCool \cite{McC} found the following presentation for the basis-conjugating automorphism group:
\begin{equation}\label{eq:McCool}
\PS_n = \langle \beta_{i,j}, 1\le i \neq j \le n \mid [\beta_{i,j},\beta_{k,l}], [\beta_{i,k},\beta_{j,k}], [\beta_{i,j},(\beta_{i,k}\cdot \beta_{j,k})]
\rangle,
\end{equation}
where $[u,v]=u^{}v^{}u^{-1}v^{-1}$, 
the indices in the relations are distinct, and the generators $\beta_{i,j}$ are the automorphisms of
$F_n$ defined by
\[
\beta_{i,j}(x_k^{})=\begin{cases} x_k^{} &\text{if $k \neq j$,}\\ x_j^{-1} x_i^{} x_j^{}&\text{if $k=i$.}
\end{cases}
\]

The integral cohomology of $\PS_n$ was determined by Jensen-McCammond-Meier \cite{JMM}, resolving a conjecture of Brownstein-Lee \cite{BL}.  We rephrase their result for a field $\k$ of characteristic zero.  Let $E$ be the exterior algebra over $\k$ generated by degree one elements $e_{p,q}$, $1\le p \neq q \le n$, and let $I$ be the two-sided ideal in $E$ generated by
\[
e_{i,j} e_{j,i},\  1\le i< j\le n,\quad   (e_{k,i}-e_{j,i})(e_{k,j}-e_{i,j}),
\  1\le i,j,k \le n,\  i<j,\  k\notin\{i,j\}.
\]
Then the cohomology algebra of the basis-conjugating automorphism group $\PS_n$ is isomorphic to the quotient of $E$ by $I$, $H^*(\PS_n;\k) \cong E/I$.  Using this description of the cohomology, Cohen \cite{Coh} computed the first resonance variety of $\PS_n$:
\begin{equation} \label{eq:mccoolres}
\RR^1(\PS_n) = \bigcup_{1\le i < j \le n}\!\! C_{i,j} \ \cup 
\bigcup_{1\le i < j<k \le n}\!\!\! C_{i,j,k} \subset H^1(\PS_n;\k)=\k^{n(n-1)},
\end{equation}
where $C_{i,j}=\spn\{e_{i,j},e_{j,i}\}$ and $C_{i,j,k}=\spn\{e_{j,i}-e_{k,i},e_{i,j}-e_{k,j},e_{i,k}-e_{j,k}\}$.

\begin{proof}[Proof of Theorem \ref{thm:mccoolchen}]
 From work of Berceanu-Papadima \cite{bp}, it is known that $\PS_n$ is $1$-formal.  And it is clear from \eqref{eq:McCool} that $\PS_n$ is a commutator-relators group.  Checking that the components $C_{i,j}$, $C_{i,j,k}$ from \eqref{eq:mccoolres} are all projectively disjoint, and are all $0$-isotropic, the Chen ranks of $\PS_n$ are given by Theorem \ref{thm:main} provided that all these components are reduced.  
 
 The symmetric group on $n$ letters acts on $\PS_n$ by permuting indices, $\sigma(\beta_{i,j})=\beta_{\sigma(i),\sigma(j)}$, and hence on the cohomology and resonance variety of $\PS_n$.  In light of this action, it suffices to show that the resonance components $C_{1,2}$ and $C_{1,2,3}$ are reduced.  We establish this using Proposition \ref{prop:crux}.
 
Let $L=C_{1,2,3}=\spn\{e_{2,1}-e_{3,1},e_{1,2}-e_{3,2},e_{1,3}-e_{2,3}\}$, and let $g \in I^2$, where $I$ is the ideal in the exterior algebra $E$ defining the cohomology of $\PS_n$.  Write
\[
g=\sum_{i<j} a_{i,j} e_{i,j}e_{j,i}+\sum_{k,i<j}b^k_{i,j}(e_{k,i}-e_{j,i})(e_{k,j}-e_{i,j}),
\]
where $a_{i,j},b_{i,j}^k \in\k$. We will show that if $l\wedge g \in I_L$ for all $l\in L$, then $g \in I_L$.  This implies that $I_L=J_L$, and hence that $L$ is reduced by Proposition \ref{prop:crux}.

Note that $s^1_{2,3}=b^1_{2,3}(e_{2,1}-e_{3,1})(e_{1,2}-e_{3,2})(e_{1,3}-e_{2,3})$ is in $I_L$.  
So $(e_{2,1}-e_{3,1})g \in I_L$ implies that $h=(e_{2,1}-e_{3,1})g - s^1_{2,3}$ is also in $I_L$.  Write
\begin{equation} \label{eq:C123}
\begin{aligned}
h&=c_{2,3}^1(e_{1,2}-e_{3,2})(e_{1,3}-e_{2,3})+c^2_{1,3}(e_{2,1}-e_{3,1})(e_{2,3}-e_{1,3})\\
&\qquad+c^3_{1,2}(e_{3,1}-e_{2,1})(e_{3,2}-e_{1,2}),
\end{aligned}
\end{equation}
where $c^1_{2,3}=\sum x_{i,j} e_{i,j}$, $c^2_{1,3}=\sum y_{i,j} e_{i,j}$, and $c^3_{1,2}=\sum z_{i,j} e_{i,j}$ with $x_{i,j},y_{i,j},z_{i,j}\in\k$. Comparing coefficients of $e_{2,1}e_{i,j}e_{j,i}$ in the left- and right-hand sides of \eqref{eq:C123} reveals that $a_{i,j}=0$ for $\{i,j\} \not\subset\{1,2,3\}$.  Similarly, if $k\notin\{1,2,3\}$ and $i<j$, 
by considering the coefficients of $e_{2,1}e_{k,i}e_{k,j}$, $e_{2,1}e_{k,i}e_{j,k}$, and $e_{2,1}e_{k,j}e_{i,k}$ in \eqref{eq:C123}, we see that $b^k_{i,j}=0$, $b^j_{i,k}=0$ (if $i<k$) or $b^j_{k,i}=0$ (if $k<i$), and $b^i_{j,k}=0$ or $b^i_{k,j}=0$.  Thus, we have $b_{i,j}^k=0$ if $\{i,j,k\}\neq\{1,2,3\}$.

These considerations imply that $h=\sum_{1\le i<j\le 3}a_{i,j}(e_{2,1}-e_{3,1})e_{i,j}e_{j,i}$.  Using \eqref{eq:C123} again, comparing coefficients of $e_{i,j}e_{1,2}e_{1,3}$, $e_{i,j}e_{2,1}e_{2,3}$ and $e_{i,j}e_{3,1}e_{3,2}$ reveals that $x_{i,j}=0$, $y_{i,j}=0$, and $z_{i,j}=0$ for $\{i,j\}\not\subset\{1,2,3\}$.  So, for instance, $c^1_{2,3}=\sum_{1\le i\neq j\le 3}x_{i,j}e_{i,j}$, and similarly for $c^2_{1,3}$ and $c^3_{1,2}$.  Then, comparing coefficients in \eqref{eq:C123} as indicated below yields the following:
\[
\begin{matrix}
e_{1,2}e_{2,3}e_{3,2}\colon x_{1,2}=0&e_{1,3}e_{2,3}e_{3,2}\colon x_{1,3}=0&e_{2,1}e_{2,3}e_{3,2}\colon 
x_{2,1}+y_{3,2}+z_{2,3}=a_{2,3}\\
e_{1,2}e_{1,3}e_{2,3}\colon x_{2,3}=0&e_{1,2}e_{1,3}e_{3,2}\colon x_{3,2}=0&e_{3,1}e_{2,3}e_{3,2}\colon
x_{3,1}+y_{3,2}+z_{2,3}=a_{2,3}
\end{matrix}
\]
It follows that $x_{2,1}+x_{3,1}=0$, and $c^1_{2,3}=x(e_{2,1}-e_{3,1})$, where $x=x_{2,1}$.  Similar considerations yield $c^2_{1,3}=y(e_{1,2}-e_{3,2})$ and $c^3_{1,2}=z(e_{1,3}-e_{2,3})$.

Summarizing, we have
\[
h=\sum_{1\le i<j\le 3}a_{i,j}(e_{2,1}-e_{3,1})e_{i,j}e_{j,i}=\lambda(e_{2,1}-e_{3,1})(e_{1,2}-e_{3,2})(e_{1,3}-e_{2,3}),
\]
where $\lambda=x+y+z$.  Comparing coefficients of, for instance, $e_{1,3}e_{1,2}e_{2,1}$ here yields $\lambda=0$, which implies that $a_{i,j}=0$ for all $i,j$.  Hence, we have
\[
g=b^1_{2,3}(e_{1,2}-e_{3,2})(e_{1,3}-e_{2,3})+b^2_{1,3}(e_{2,1}-e_{3,1})(e_{2,3}-e_{1,3})+b^3_{1,2}(e_{3,1}-e_{2,1})(e_{3,2}-e_{1,2})
\]
and $g\in I_L$.  Thus, $I_L=J_L$, and the component $L=C_{1,2,3}$ of $\RR^1(\PS_n)$ is reduced.

A similar (easier) argument, which we leave to the reader, shows that the component $C_{1,2}$ of $\RR^1(\PS_n)$ is also reduced.  Thus, Theorem \ref{thm:main} may be used to compute the Chen ranks of $\PS_n$.   Noting that $\RR^1(\PS_n)$ has $\binom{n}{2}$ two-dimensional components and $\binom{n}{3}$ three-dimensional components completes the proof of Theorem \ref{thm:mccoolchen}.
\end{proof}

\begin{remark} \label{rem:upper}
The upper triangular McCool groups illustrate the necessity of the hypotheses of Theorem \ref{thm:main}. For each $n\ge 2$, the upper triangular McCool group is the subgroup $\PS_n^+$ of $\PS_n$ generated by the elements $\beta_{i,j}$ with $1\le i<j \le n$, subject to the relevant relations \eqref{eq:McCool}. Thus, $\PS_n^+$ is a commutator-relators group.  Moreover, in \cite{bp}, Berceanu-Papadima remark that $\PS_n^+$ is $1$-formal.

In \cite{CPVW}, Cohen-Pakianathan-Vershinin-Wu determine the integral cohomology of $\PS_n^+$, see also \cite{DCcompo}.  
We rephrase their result for a field $\k$ of characteristic zero.  Let $E^+$ be the exterior algebra over $\k$ generated by elements $e_{p,q}$, $1\le p < q \le n$, of degree one, and let $I^+$ be the two-sided ideal in $E^+$ generated by $e_{j,k}(e_{i,k}-e_{i,j})$, $1\le i<j<k\le n$. Then, $H^*(\PS_n^+;\k) \cong E^+/I^+$.

Using the above description of the cohomology ring, one can check that the resonance variety 
 $\RR^1(\PS_4^+)$ has components
 \[
\begin{aligned}
&\spn\{e_{1,3}-e_{1,2},e_{2,3}\}, \ \spn\{e_{1,4}-e_{1,2},e_{2,4}\}, \text{ and }\\
&\spn\{e_{1,3}-e_{1,4},e_{2,3}-e_{2,4},e_{3,4}\}.
\end{aligned}
\]
These components are projectively disjoint. 
However, the $3$-dimensional component $L=\spn\{e_{1,3}-e_{1,4},e_{2,3}-e_{2,4},e_{3,4}\}$ is neither $0$-isotropic nor reduced. For the former, note that $(e_{1,3}-e_{1,4})(e_{2,3}-e_{2,4})$ is nonzero in $H^*(\PS_4^+;\k)$. For the latter, $L$ has an embedded component $\spn\{e_{3,4}\}$. Accordingly (see Proposition \ref{prop:crux2}), the ideals $I_L$ and $J_L$ are not equal. For instance, $e_{2,4}(e_{1,4}-e_{1,2})-e_{2,3}(e_{1,3}-e_{1,2})$ is in $J_L$, but not in $I_L$.

In light of the above observations, it is not surprising that the Chen ranks formula of Theorem \ref{thm:main} does not hold for the upper triangular McCool groups. For example, a computation reveals that, for $k\gg 0$, the Chen ranks of $\PS_4^+$ are given by $\theta_k(\PS_4^+)=1+\frac{5}{2}\theta_k(F_2)+\frac{1}{2}\theta_k(F_3)$, which differs from the value $2\theta_k(F_2)+\theta_k(F_3)$ naively predicted from the resonance variety $\RR^1(\PS_4^+)$.
\end{remark}

\section{Hyperplane arrangements} \label{sec:arr}
Let $\A=\{H_1,\dots,H_n\}$ be a hyperplane arrangement in $\C^\ell$, with complement $M(\A)=\C^\ell\smallsetminus \bigcup_{i=1}^n H_i$.  We assume that $\A$ is a central arrangement, i.e., that each hyperplane of $\A$ passes through the origin.  Let $L(\A)=\{\bigcap_{H\in\BB}H \mid \BB\subseteq\A\}$ be the intersection lattice of $\A$, with rank function given by codimension.  We refer to elements of $L(\A)$ as flats. Recall that $\k$ is a field of characteristic zero.
A well known theorem of Orlik-Solomon \cite{OS} yields a presentation for the cohomology ring $A=H^*(M(\A);\k)$, the Orlik-Solomon algebra, in terms of the lattice $L(\A)$.  Let $G=\pi_1(M(\A))$ be the fundamental group of the complement.  As noted in the introduction, the arrangement group $G$ is a $1$-formal, commutator-relators group, and the resonance varieties $\RR^1(G)$ and $\RR^1(A)$ coincide.  In this context, we denote this variety by $\RR^1(\A)$, the first resonance variety of the arrangement $\A$.

Falk \cite{Fa} initiated the study of resonance varieties in the context of arrangements. 
Among
his main innovations was the concept of a neighborly
partition.  A partition $\Pi$ of $\A$ is neighborly if, for
any rank two flat $Y\in L_2(\A)$ and any block $\pi$ of $\Pi$,
\[
|Y|-1 \le |Y \cap \pi| \Longrightarrow Y\subseteq \pi, 
\]
Partitions with a single block will be called trivial, others nontrivial. 
Flats contained in a single block of $\Pi$ will be referred to as monochrome, others polychrome. 
Flats of multiplicity two are necessarily monochrome. 

Falk showed that all components of $\RR^1(\A)$ arise from
nontrivial neighborly partitions of subarrangements of $\A$, and conjectured that $\RR^1(\A)$ was a 
subspace arrangement. This was proved simultaneously by Cohen-Suciu in \cite{CScv} and 
Libgober-Yuzvinsky in \cite{LY}, and the latter also showed that the irreducible components of $\RR^1(\A)$ are projectively disjoint.  As noted by Falk-Yuzvinsky \cite{FY} (see also \cite{Fa}), these components are $0$-isotropic.  Arrangements which admit nontrivial neighborly partitions, and corresponding resonance components, include all  central arrangements in $\C^2$, the rank $3$ braid arrangement, the Pappus, Hessian, and type B Coxeter arrangements in $\C^3$, etc., 
see \cite{CScv,Fa, PY,Su} among others.

Let $\Pi$ be a neighborly partition of a subarrangement $\A'$ of $\A$.  Following \cite{Fa, Fa01, LY}, we explicitly describe the corresponding component $L_\Pi$ of $\RR^1(\A)$.  Let $E$ be the exterior algebra over $\k$, with generators $e_1,\dots,e_n$ corresponding to the hyperplanes of $\A$.  The Orlik-Solomon algebra is given by $H^*(M(\A);\k)=A=E/I$, where $I$ is the Orlik-Solomon ideal of $\A$.  This ideal is generated by boundaries of circuits, $\partial e_{i_1}\cdots e_{i_k}$, where $\{H_{i_1},\dots,H_{i_k}\}$ is a minimally dependent set of hyperplanes in $\A$ and $\partial\colon E \to E$ is defined by $\partial 1=0$, $\partial e_i=1$, and $\partial(uv)=(\partial u) v + (-1)^{|u|}u(\partial v)$, $|u|$ denoting the degree of $u$, see \cite[Ch. 3]{OT}. For $u=\sum_{i=1}^n u_i e_i \in E^1$, write $\partial_i u = u_i$, and $\partial_X u = \sum_{X \subset H_i} u_i$ for a rank $2$ flat $X$.  Let $\mathrm{poly}(\Pi)$ denote the set of rank two flats $X$ which are polychrome with respect to $\Pi$.    Then the resonance component corresponding to the neighborly partition $\Pi$ of $\A' \subset \A$ is given by
\begin{equation} \label{eq:res comp}
L_\Pi=\{u \in E^1 \mid \partial u = 0,\ \partial_X u = 0\ \forall X \in \mathrm{poly}(\Pi),\ \partial_i u=0\ \forall H_i \notin \A'\}.
\end{equation}

Since the irreducible components of $\RR^1(\A)$ are $0$-isotropic and projectively disjoint, to bring Theorem \ref{thm:main} to bear in the context of arrangement groups, it suffices to 
show that these components are also reduced. 
To establish this, in addition to Falk's description of components of $\RR^1(\A)$ in terms of neighborly partitions described above, we will also use the more recent Falk-Yuzvinsky \cite{FY} description of resonance components in terms of multinets, which utilizes the Libgober-Yuzvinsky \cite{LY} treatment in terms of the Vinberg classification of generalized Cartan matrices.

A multiarrangement is an arrangement $\A$ together with a multiplicity function $\nu\colon \A \to \N$ which assigns a positive integer $\nu(H)$ to each hyperplane in $\A$. A $(k,d)$-multinet on a multiarrangement $(\A,\nu)$ (of lines in $\C\P^2$) is a pair $(\Pi,\mathcal X)$, where $\Pi=(\beta_1|\beta_2|\cdots|\beta_{k})$ is a partition of $\A$ with $k\ge 3$ blocks, and $\mathcal X$ is a set of rank $2$ flats of $\A$, satisfying
\begin{enumerate}
\item[(i)] For each block $\beta_i$ of the partition $\Pi$, the sum $\sum_{H \in \beta_i} \nu(H) = d$ is constant, independent of $i$;
\item[(ii)] For each $H\in\beta_i$ and $H' \in\beta_j$ with $i\neq j$, the flat $H\cap H'$ is in $\mathcal X$;
\item[(iii)] For each flat $X \in \mathcal X$, $\sum_{H \in \beta_i, X\subset H} \nu(H)$ is constant, independent of $i$;
\item[(iv)] For each $i$ and $H,H' \in \beta_i$, there is a sequence $H=H_0,H_1,\dots,H_r=H'$ of hyperplanes of $\A$ such that $H_{j-1} \cap H_j \notin \mathcal X$ for $1\le j\le r$.
\end{enumerate}

As shown by Falk-Yuzvinsky \cite{FY}, an $\ell$-dimensional component of $\RR^1(\A)$ corresponds to a multiplicity function and an $(\ell+1,d)$-multinet on a subarrangement of $\A$, for some $d$. Let $e_1,\dots,e_n$ be the standard generators of the exterior algebra $E$, in correspondence with the hyperplanes of $\A$. 
If $(\Pi,\mathcal X)$ is a multinet on $(\A',\nu)$, where $\A'\subset \A$ and $\Pi=(\beta_1|\beta_2|\cdots|\beta_{\ell+1})$, the corresponding resonance component $L=L_\Pi$ has basis
\begin{equation} \label{eq:multinetbasis}
\{\nu_1 - \nu_{\ell+1}, \nu_2 - \nu_{\ell+1}, \dots, \nu_\ell - \nu_{\ell+1}\}, \ \text{where $\nu_i = \sum_{H_j \in \beta_i} \nu(H_j) e_j = \sum_{H_j\in\beta_i} \nu_{i,j} e_j$,}
\end{equation}
see \cite[Thms. 2.4, 2.5]{FY}.
Note that $\dim L=\ell$, the partition $\Pi$ is neighborly, the elements of $\mathcal X$ are the polychrome flats of $\Pi$, the coefficients $\nu_{i,j}=\nu(H_j)$ are positive integers, and $\partial \nu_i = \partial \nu_j$ for $1\le i,j\le \ell+1$ by (i) above.

\begin{thm} \label{thm:Ared} For any hyperplane arrangement $\A$, the irreducible components of the resonance variety $\RR^1(\A)$ are reduced.
\end{thm}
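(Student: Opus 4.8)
The plan is to deduce the statement from Proposition~\ref{prop:crux2}: it suffices to show that $I_L = J_L$ for every irreducible component $L$ of $\RR^1(\A)$. Since the components of $\RR^1(\A)$ are $0$-isotropic, $\bigwedge^2 L \subseteq I^2$ and hence $I_L \subseteq J_L$ automatically; as $J_L$ and $I_L$ are both generated in degree~$2$, and $J_L^2 = (\langle l_1,\dots,l_m\rangle \wedge E^1) \cap I^2$ where $\{l_1,\dots,l_m\}$ is a basis of $L$, the problem reduces to proving
\[
(\langle l_1,\dots,l_m\rangle \wedge E^1) \cap I^2 \subseteq \textstyle\bigwedge^2 L .
\]
Because $\RR^1$ depends only on the rank-$\le 2$ part of the intersection lattice, I may pass to a generic two-dimensional section and assume $\A$ is an arrangement of lines in $\C\P^2$; by the Falk and Falk--Yuzvinsky dictionaries, $L = L_\Pi$ for a multinet $(\Pi,\mathcal X)$ on a subarrangement $\A' \subseteq \A$, with basis $l_i = \nu_i - \nu_{\ell+1}$ as in \eqref{eq:multinetbasis}. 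Fix $g$ in the left-hand side and expand $g = \sum_{j=1}^{\ell+1} \nu_j \wedge c_j$ with $\sum_j c_j = 0$.

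The next step uses the Orlik--Solomon decomposition $I^2 = \bigoplus_{X \in L_2(\A)} I^2_X$ together with the identification $I^2_X = \ker(\partial\colon E^2_X \to E^1_X)$ (valid since $\partial^2 = 0$ and the dimensions agree). Writing $g = \sum_X g_X$ accordingly, I would first show that $g_X = 0$ whenever $X$ is not a polychrome flat of $\Pi$. Indeed, if every hyperplane of $\A'$ through $X$ lies in a single block $\beta_i$ (or there is at most one), then only that block contributes to $g_X$, so $g_X = v_X \wedge w_X$ with $w_X \in E^1_X$ and $v_X = \sum_{H_a \in \beta_i,\, X \subset H_a} \nu(H_a)\, e_a$ a \emph{positive} combination of basis vectors; then $\partial g_X = 0$ gives $\partial(v_X) w_X = \partial(w_X) v_X$, and $\partial(v_X) > 0$ forces $w_X \in \k v_X$, so $g_X = 0$. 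Thus $g = \sum_{X \in \mathcal X} g_X$. For a polychrome flat $X$, multinet axiom~(iii) says the restrictions $\nu_1^X,\dots,\nu_{\ell+1}^X$ have a common positive value $n_X$ under $\partial$ and pairwise disjoint supports; from $\partial g_X = 0$ one then extracts $\partial c_j^X = 0$ for all $j$, so $g_X$ lies in $L^X \wedge E^1_X$ with $L^X = \spn\{\nu_i^X - \nu_{\ell+1}^X\}$.

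The principal obstacle is the last step. It is \emph{not} true in general that $(L^X \wedge E^1_X) \cap \ker\partial$ coincides with $\bigwedge^2 L^X$ — this already fails, for abstract local data satisfying axioms (i)--(iv), as soon as two hyperplanes of one block pass through $X$, i.e. when $n_X > 1$ — so the multinet axioms alone do not permit a flat-by-flat conclusion, and moreover $\bigwedge^2 L$ is not the direct sum of its pieces $\bigwedge^2 L^X$. This is where I would invoke the Libgober--Yuzvinsky analysis via the Vinberg classification of the generalized Cartan matrix attached to the multinet: it forces the multinet, and in particular its local structure at the polychrome flats, to be rigid enough (for instance, each block meeting each $X \in \mathcal X$ in a single multiplicity-one hyperplane, with the Hessian multinet possibly requiring a direct check). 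Once the local structure at each $X \in \mathcal X$ is pinned down, the relations $\sum_j c_j = 0$ and $\partial c_j^X = 0$, together with the global coherence of the $c_j$ across the flats of $\mathcal X$, should force $g \in \bigwedge^2 L$, giving $I_L = J_L$ and hence reducedness. I expect the genuine work to lie in converting the Cartan-matrix constraint into the combinatorial assertion that the unavoidable ``spill-over'' of $g$ away from a polychrome flat $X$ cannot be absorbed consistently unless $g_X \in \bigwedge^2 L^X$.
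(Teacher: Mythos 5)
Your setup is exactly the right one, and in fact you have already proved the theorem by the time you write ``from $\partial g_X = 0$ one then extracts $\partial c_j^X = 0$ for all $j$'' --- the remainder of your proposal is a wrong turn. Having $\partial c_j^X = 0$ for every polychrome flat $X$ is the same as saying $\partial_X c_j = 0$; combined with $\partial c_j = 0$ (which you should record: it follows from $\partial g = 0$, the relation $\sum_j c_j = 0$, and the linear independence of the disjointly-supported $\nu_j$), Falk's characterization \eqref{eq:res comp} of $L = L_\Pi$ then gives $c_j \in L$ for every $j$, directly. At that point
\[
g \;=\; \sum_{j=1}^{\ell+1} \nu_j \wedge c_j \;=\; \sum_{j=1}^{\ell} (\nu_j - \nu_{\ell+1}) \wedge c_j \;+\; \nu_{\ell+1} \wedge \sum_{j} c_j \;=\; \sum_{j=1}^{\ell} (\nu_j - \nu_{\ell+1}) \wedge c_j \;\in\; \textstyle\bigwedge^2 L = I_L,
\]
and you are done. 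The ``principal obstacle'' you identify --- that $(L^X \wedge E^1_X)\cap\ker\partial$ may be strictly larger than $\bigwedge^2 L^X$ --- is real but irrelevant: the argument does not, and need not, establish that each local piece $g_X$ lies in $\bigwedge^2 L^X$. What it establishes is that the \emph{coefficients} of $g$ relative to the basis $\{\nu_i - \nu_{\ell+1}\}$ of $L$ satisfy all of Falk's linear conditions, hence lie in $L$. The Vinberg classification and Cartan-matrix rigidity you propose to invoke are not needed. This is precisely the route the paper takes: it writes $g = \sum_i (\nu_i - \nu_{\ell+1})\, u_i$ (your $c_j$ up to relabelling), shows $\partial u_i = 0$, and for each polychrome $X$ uses the linear independence of the restrictions $y_i = (\nu_i - \nu_{\ell+1})^X$ --- guaranteed by positivity of the multinet multiplicities, just as in your disjoint-support argument --- to force $\partial_X u_i = 0$, concluding $u_i \in L_\Pi$.

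Two smaller remarks. First, you should not simply write that the problem ``reduces'' to a generic plane section and to a subarrangement $\A'$ carrying the multinet and leave it there: if $\A' \subsetneq \A$, the coefficients $c_j$ can a priori have support outside $\A'$, and then you also need the third condition in \eqref{eq:res comp}, $\partial_i c_j = 0$ for $H_i \notin \A'$. The paper handles this up front by showing every generator of $J_L$ lies in the subalgebra $E'$ on the variables of $\A'$ (using that $x\,g \in I_L$ for all $x\in L$ kills the mixed and outside pieces of $g$), after which the third condition becomes vacuous and one may assume $\A' = \A$. Second, your preliminary observation that $g_X = 0$ for non-polychrome $X$ is correct and a nice sanity check, but it is not used by the paper and is not needed for the conclusion.
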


\begin{rem} The conclusion reached  at the end of the second paragraph of the proof of this result in the published version of the paper [Adv. Math. \textbf{285} (2015), 1--27] is not valid. We thank Claudiu Raicu for bringing this to our attention, and provide a alternate proof of the theorem here.
\end{rem}

\begin{proof}[Proof of Theorem \ref{thm:Ared}]  
 
We first note that it suffices to consider components of $\RR^1(\A)$ which are supported on the entire arrangement. If $L\subset \RR^1(\A)$ is a component supported on a proper subarrangement $\A'$ of $\A$, and $L$ is nonreduced in $H^1(M(\A);\Bbbk)$, then $L \subset  
H^1(M(\A');\Bbbk) \subset H^1(M(\A);\Bbbk)$ is necessarily nonreduced in $H^1(M(\A');\Bbbk)$
as well.

So let $L \subset\RR^1(\A)$ be an irreducible component which is supported on the entire arrangement $\A$.  By Propositions \ref{prop:crux} and \ref{prop:crux2},
it suffices to show that the ideals $I_L=\bigwedge^2L$ and $J_L=\langle g \in I^2 \mid x  g \in I_L \forall x \in L\rangle$ in the exterior algebra $E$ are equal. 

The component $L$ corresponds to multinet structure on $\A$, with associated multiplicity function $\nu\colon \A \to \N$, neighborly partition $\Pi$, and ${\mathcal X} = \mathrm{poly}(\Pi) \subset L_2(\A)$ the polychrome flats of $\Pi$.  
%We use the standard generators $e_1,\dots,e_n$ of $E$,  in correspondence with the hyperplanes of $\A$. 
If the  partition $\Pi$ of $\A$
is given by $\Pi=(\beta_1\mid \beta_2\mid\dots\mid \beta_{\ell+1})$, the resonance component $L$ has basis given by 
\eqref{eq:multinetbasis} above and $\dim L=\ell$.
%\cite[(5.2)]{CSaim}, 
%\begin{equation} \label{eq:multinetbasis}
%\{\nu_1 - \nu_{\ell+1}, \nu_2 - \nu_{\ell+1}, \dots, \nu_\ell - \nu_{\ell+1}\}, \ \text{where $\nu_i = \sum_{H_j \in \beta_i} \nu(H_j) e_j = \sum_{H_j\in\beta_i} \nu_{i,j} e_j$,}
%\end{equation}
%and $\dim L=\ell$, see see \cite[Thms. 2.4, 2.5]{FY}. 

Recall from \eqref{eq:intersect} that 
% \cite[(3.6)]{CSaim} that
$J_L = \langle \nu_1-\nu_{\ell+1},\dots,\nu_\ell-\nu_{\ell+1} \rangle \cap I$.
Consequently, if $g \in J_L$ is a generator (not necessarily in $I_L$), we can write 
\[
g=(\nu_1-\nu_{\ell+1})u_1+\dots+(\nu_\ell-\nu_{\ell+1})u_\ell.
\] 
We will show that $g \in I_L$ by showing 
that $u_i \in L = L_\Pi$ for each $i$, $1\le i\le \ell$, 
that is, $u_i$ satisfies the %polychrome flat 
conditions of  \eqref{eq:res comp}. 
%\cite[(5.1)]{CSaim}. 
Note that the last of these conditions is vacuous.
%, that is, $u_i \in L = L_\Pi$. 

Since $\nu_i-\nu_{\ell+1}\in L$, we have $\partial (\nu_i-\nu_{\ell+1})=0$.  Similarly, since $g$ is in the Orlik-Solomon ideal $I$, we have $\partial g=0$.  Computing
\[
\partial g = -(\nu_1-\nu_{\ell+1}) \partial u_1-\dots-(\nu_\ell-\nu_{\ell+1})\partial u_\ell=0, 
\]
the fact that $\{\nu_1-\nu_{\ell+1},\dots,\nu_\ell-\nu_{\ell+1}\}$ is linearly independent implies that $\partial u_1=\dots=\partial u_\ell=0$.  It remains to show that $\partial_X(u_i)=0$ for $X \in \mathrm{poly}(\Pi)$ a rank two flat of $\A$ which is polychrome with respect to $\Pi$.

Let $X\in L_2(\A)$ be a a polychrome flat, that is, $X \in \mathcal X$.  Then $X$ meets each block of $\Pi$, see \cite[Rem. 3.10]{LY}.  We can assume that $X$ is contained in the hyperplanes $H_1, \dots, H_{\ell+1}$ of $\A$ (and possibly others), and that $H_j \in \beta_j$ for $1\le j \le \ell+1$.  Write the basis elements of $L$ as $\nu_i-\nu_{\ell+1}=y_i+z_i$, where $y_i\in E^1_X=\spn\{e_r \mid X \subset H_r\}$ and $z_i=\nu_i-\nu_{\ell+1}-y_i$ for each $i$, $1\le i\le \ell$. From our assumptions concerning the hyperplanes containing $X$ and \eqref{eq:multinetbasis}, we have
\[
y_i = \nu_{i,i}e_i - \nu_{\ell+1,\ell+1} e_{\ell+1} + \sum_{k>\ell+1,X\subset H_k} (\nu_{i,k}-\nu_{\ell+1,k})e_k.
\]
Since the multiplicities $\nu_{i,j}=\nu(H_j)$ are positive integers, it is readily checked that $\{y_1,\dots,y_\ell\}$ is linearly independent.

To analyze $\partial_X(u_i)$, first note that, since $\nu_i-\nu_{\ell+1} =y_i+z_i\in L$, we have $\partial_X(\nu_i-\nu_{\ell+1})=0$, which implies that $\partial_X y_i=\partial y_i=0$. This, together with $\partial(\nu_i-\nu_{\ell+1})=0$, yields $\partial z_i=0$ for each $i$. Writing
\[
g=\sum_{i=1}^\ell (\nu_i-\nu_{\ell+1}) u_i=\sum_{i=1}^\ell (y_i+z_i)u_i
=\sum_{i=1}^\ell y_iu_i+\sum_{i=1}^\ell z_iu_i = g_1+g_2
\]
we have $g_1=\sum_{i=1}^\ell y_iu_i \in I$ since $g \in J_L$ is in $I$ and $\partial z_i=\partial u_i=0$ implies that $g_2=\sum_{i=1}^\ell z_iu_i $ is also in $I$.

Now write $u_i=v_i+w_i$, where $v_i\in E^1_X$ and $w_i=u_i-v_i$, so that $\partial_X(u_i)=\partial v_i$.  Then
\[
g_1=\sum_{i=1}^\ell y_iu_i=\sum_{i=1}^\ell y_i(v_i+w_i)=\sum_{i=1}^\ell y_iv_i+\sum_{i=1}^\ell y_iw_i=h+k.
\]
It is known  that the degree two part of the Orlik-Solomon ideal $I$ decomposes as $I^2=\bigoplus_{Y \in L_2(\A)} I^2_Y$, see \cite[Prop.~3.24]{OT}. We assert that $h=\sum_{i=1}^\ell y_iv_i$ is in $I^2_X$.

For a contradiction, assume otherwise. Then $h$ represents a nontrivial element in $A^2(\A_X)$, the Orlik-Solomon algebra of the subarrangement $\A_X\subset \A$. Then, by the Brieskorn lemma \cite[Cor.~3.27]{OT}, $h$ represents a nontrivial element in $A^2=A^2(\A)= \bigoplus_{Y \in L_2(\A)} A^2(\A_Y)$. Since $k=\sum_{i=1}^\ell y_iw_i \in E^1_X \wedge (E^1 \smallsetminus E^1_X)$, $g_1=h+k$ also represents a nontrivial element in $A^2$, contradicting the fact that $g_1 \in I$.

Using $h \in I^2_X \implies \partial h=0$ and the fact that $\partial y_i=0$ noted above, we have
\[
\partial h = \partial y_1 v_1+ \dots + \partial y_\ell v_\ell - y_1 \partial v_1-\dots - y_\ell \partial v_\ell = -y_1 \partial v_1-\dots - y_\ell \partial v_\ell =0.
\]
The linear independence of the set $\{y_1,\dots,y_\ell\}$  then yields $\partial v_1=\dots=\partial v_\ell=0$, that is, $\partial_X u_i=0$ for each $i$.  But the polychrome flat $X$ was arbitrary, so $u_1,\dots,u_\ell$ 
satisfy $\partial_X u_i = 0$ for each $i$ and every flat $X$ of $\A$ which is polychrome with respect to $\Pi$. Thus, $u_i \in L$ for each $i$, and $g=\sum_{i=1}^\ell (\nu_i-\nu_{\ell+1})u_i$ is in the ideal $I_L$.
\end{proof}

\begin{rem}
One can alternatively reduce to the case of resonance components supported on the entire arrangement $\A$ (as in the first paragraph of the above proof) using the ideals $I_L$ and $J_L$. Let $\A' \subset \A$ be a proper subarrangement, and write $I^2(\A)=I^2(\A')\oplus I^2(\A,\A')$, where 
\[
\begin{aligned}
I^2(\A')&=\langle \partial e_i e_j e_k \mid H_i\cap H_j \cap H_k \in L_2(\A') \rangle,\\
I^2(\A,\A')&=\langle \partial e_i e_j e_k \mid H_i\cap H_j \cap H_k \in L_2(\A)\ \text{and}\ 
|\{H_i,H_j,H_k\}\cap (\A\smallsetminus\A')| \ge 1 \rangle.
\end{aligned}
\]
If $L$ is a resonance component supported on $\A'$ and $g \in J_L$ is a generator, write $g=g_1+g_2$, where $g_1 \in I^2(\A')$ and 
$g_2 \in I^2(\A,\A')$. One can then use the condition $x g \in I_L$ for any $x \in L$ to show that $g_2=0$.

This chore is facilitated by results on the structure of multinets, namely there are no multinets with more than four blocks, see \cite{BZ} and the references therein.
\end{rem}

%Thus, 
In light of Theorem \ref{thm:Ared}, 
Theorem \ref{thm:main} provides a formula for the ranks of the Chen groups of $G$ in terms of the resonance variety $\RR^1(\A)$.  
This formula, $\theta_k(G)=\sum_{m\ge 2} h_m \theta_k(F_m)$ for $k\gg 0$, where $h_m$ is the number of irreducible components of $\RR^1(\A)$ of dimension $m$, for the Chen ranks was conjectured by Suciu \cite{Su}.
In \cite{SS2}, Schenck-Suciu proved that $\theta_k(G)\ge\sum_{m\ge 2} h_m \theta_k(F_m)$ for $k\gg 0$.  
Suciu's original conjecture predicted equality in 
this Chen ranks formula for all $k\ge 4$, but in \cite{SS2} it is shown 
the value for which $\theta_k(G)$ is given by a 
fixed polynomial in $k$ depends on the Castelnuovo-Mumford regularity 
of the linearized Alexander invariant of $G$.

\begin{example} \label{ex:hess}
Let $\A$ be the Hessian arrangement in $\C^3$, defined by the polynomial $Q=xyz\prod_{1\le i,j\le 3}(x+\omega^iz+\omega^jz)$, where $\omega=\exp(2\pi i/3)$. The projectivization of $\A$ consists of the twelve lines in $\C\P^2$ passing
through the nine inflection points of a smooth plane cubic curve. 
Four lines meet at each of the nine inflection points, yielding nine rank $2$ flats in $L(\A)$ of cardinality $4$, and associated $3$-dimensional components of $\RR^1(\A)$.  The arrangement $\A$ has $54$ subarrangements lattice-isomorphic to the rank $3$ braid arrangement. Each of these contributes a $2$-dimensional component to $\RR^1(\A)$.     The arrangement $\A$ itself admits a nontrivial neighborly partition, and has a corresponding $3$-dimensional component of $\RR^1(\A)$. A calculation reveals that these ($10$ $3$-dimensional and $54$ $2$-dimensional) components constitute all irreducible components of $\RR^1(\A)$.  Consequently, if $G$ is the fundamental group of the complement of $\A$, by Theorem \ref{thm:main} we have $\theta_k(G) = 10(k^2-1)+54(k-1)\ \text{for}\ k\gg 0$.
\end{example}

\section{Coxeter arrangements} \label{sec:cox}
In this section, we use Theorem \ref{thm:main} to determine the Chen ranks of the pure braid groups associated to the Coxeter groups of types A, B, and D, proving Theorem \ref{thm:coxeter}.

\begin{example} \label{ex:braid}
Let $\A_n$ be the braid arrangement, the type A Coxeter arrangement in $\C^n$ with hyperplanes $\ker(x_i-x_j)$, $1\le i<j\le n$.  The complement of $\A_n$ is the configuration space of $n$ ordered points in $\C$, with fundamental group  the Artin pure braid group $G=P_n$.  The resonance variety $\RR^1(\A_n)$ has $\binom{n+1}{4}$ two-dimensional irreducible components, see \cite{CScv,Per10}. 
Theorem \ref{thm:main} yields $\theta_k(P_n)=(k-1)\binom{n+1}{4}\ \text{for}\ k\gg 0$, as first calculated in \cite{CS1}.

More generally, let $\Gamma$ be a simple graph on vertex set $\{1,\dots,n\}$, and let $\A_\Gamma$ be the corresponding graphic arrangement in $\C^n$, consisting of the hyperplanes $\ker(x_i-x_j)$ for which $\{i,j\}$ is an edge of $\Gamma$.  The resonance variety $\RR^1(\A_\Gamma)$ has $\kappa_3+\kappa_4$  two-dimensional irreducible components, where $\kappa_m$ denotes the number of complete subgraphs on $m$ vertices in $\Gamma$, see \cite{SS2}.   If $G$ is the fundamental group of the complement of $\A_\Gamma$, Theorem \ref{thm:main} yields $\theta_k(G)=(k-1)(\kappa_3+\kappa_4)\ \text{for}\ k\gg 0$, as first calculated in \cite{SS2}.
\end{example}

\subsection{Resonance of the Coxeter arrangement of type D}
Let $\DD_n$ be the type D Coxeter arrangement in $\C^n$, with $n(n-1)$ hyperplanes $H_{i,j}^{\pm}=\ker(x_i\pm x_j)$, $1\le i<j\le n$.  
The rank $2$ flats of $\DD_n$ are
\begin{equation*} \label{eq:D2flats}
\begin{array}{lll}
H_{i,j}^-\cap H_{i,k}^-\cap H_{j,k}^-,&
H_{i,j}^-\cap H_{i,k}^+\cap H_{j,k}^+,&
H_{p,q}^-\cap H_{p,q}^+,\\[5pt]
H_{i,j}^+\cap H_{i,k}^-\cap H_{j,k}^+,&
H_{i,j}^+\cap H_{i,k}^+\cap H_{j,k}^-,&
H_{p,q}^\pm \cap H_{r,s}^\pm,
\end{array}
\end{equation*}
where $1\le i < j < k \le n$, $1\le p < q \le n$, $1\le r<s\le n$, and $\{p,	q\}\cap\{r,s\}=\emptyset$.
Note that $\DD_n$ has $4\binom{n}{3}$ rank two flats of multiplicity $3$, and $\binom{n}{2}+12\binom{n}{4}$ rank two flats of multiplicity $2$.  
We determine the structure of the 
variety $\RR^1(\DD_n) \subset \k^{n(n-1)}$.
  
Recall that $\A_n$ is the type A Coxeter arrangement in $\C^n$.  Denote the hyperplanes of $\A_n$ by $H_{i,j}^-=\ker(x_i-x_j)$, $1\le i<j\le n$.  
Note that $\A_4$ is lattice-isomorphic to  $\DD_3$.  It is well known that $\A_3$ and $\A_4$ subarrangements give rise to two-dimensional components of the first resonance variety, see Falk \cite{Fa}.  It is also known that $\DD_4$ subarrangements also yield two-dimensional components of the first resonance variety, see Pereira-Yuzvinsky \cite{PY}. These facts yield a number of components of $\RR^1(\DD_n)$, which we record explicitly.  

Each triple $1\le i<j<k\le n$ yields four $\A_3$ subarrangements of $\DD_n$:
\[
1\ \{H_{i,j}^-,H_{i,k}^-,H_{j,k}^-\},\ 2\ \{H_{i,j}^+,H_{i,k}^-,H_{j,k}^+\},\ 
3\ \{H_{i,j}^-,H_{i,k}^+,H_{j,k}^+\},\ 4\ \{H_{i,j}^+,H_{i,k}^+,H_{j,k}^-\}.
\]
Denote the corresponding components of $\RR^1(\DD_n)$ by $U_{i,j,k}^q$, $1\le q \le 4$.
Each such triple also yields the $\A_4$ subarrangement $\{H_{i,j}^{\pm},H_{i,k}^{\pm},H_{j,k}^{\pm}\}$, with corresponding resonance component $V_{i,j,k}$.  

Each 4-tuple $1\le i<j<k<l\le n$ yields eight $\A_4$ subarrangements of $\DD_n$:
\[
\begin{array}{cc}
1&\{H_{i,j}^-,H_{i,k}^-,H_{i,l}^-,H_{j,k}^-,H_{j,l}^-,H_{k,l}^-\}\\
3&\{H_{i,j}^-,H_{i,k}^+,H_{i,l}^+,H_{j,k}^+,H_{j,l}^+,H_{k,l}^-\}\\
5&\{H_{i,j}^+,H_{i,k}^-,H_{i,l}^+,H_{j,k}^+,H_{j,l}^-,H_{k,l}^+\}\\
7&\{H_{i,j}^+,H_{i,k}^+,H_{i,l}^-,H_{j,k}^-,H_{j,l}^+,H_{k,l}^+\}
\end{array}
\qquad 
\begin{array}{cc}
2&\{H_{i,j}^+,H_{i,k}^+,H_{i,l}^+,H_{j,k}^-,H_{j,l}^-,H_{k,l}^-\}\\
4&\{H_{i,j}^-,H_{i,k}^-,H_{i,l}^+,H_{j,k}^+,H_{j,l}^+,H_{k,l}^-\}\\
6&\{H_{i,j}^-,H_{i,k}^+,H_{i,l}^-,H_{j,k}^+,H_{j,l}^-,H_{k,l}^+\}\\
8&\{H_{i,j}^+,H_{i,k}^-,H_{i,l}^-,H_{j,k}^-,H_{j,l}^+,H_{k,l}^+\}
\end{array}
\]
Denote the corresponding components of $\RR^1(\DD_n)$ by $V_{i,j,k,l}^q$, $1\le q \le 8$.  
Each such 4-tuple also yields the subarrangement $\{H_{i,j}^{\pm},H_{i,k}^{\pm},H_{i,l}^{\pm},H_{j,k}^{\pm},H_{j,l}^{\pm},H_{k,l}^{\pm}\}$ isomorphic to $\DD_4$, with corresponding resonance component $W_{i,j,k,l}$. 

\begin{thm} \label{thm:Dres}
The first resonance variety of the arrangement $\DD_n$ is given by
\[
\RR^1(\DD_n)= \bigcup_{i<j<k}\Bigl(V_{i,j,k}\cup \bigcup_{q=1}^4 U_{i,j,k}^q\Bigr)\cup 
\bigcup_{i<j<k<l}\Bigl(W_{i,j,k,l}\cup \bigcup_{q=1}^8 V_{i,j,k,l}^q\Bigr).
\]
Hence, $\RR^1(\DD_n)\subset \k^{n(n-1)}$ is a union of $5\binom{n}{3}+9\binom{n}{4}$ two-dimensional components. 
\end{thm}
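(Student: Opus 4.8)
The plan is to combine the description of resonance components through neighborly partitions and multinets (Falk \cite{Fa}, Libgober--Yuzvinsky \cite{LY}, Falk--Yuzvinsky \cite{FY}) with the fact that $\DD_n$ has a very restricted collection of rank two flats. First I would isolate the \emph{local} components. A rank two flat $X$ with $|X|=m\ge 3$ contributes a local component of $\RR^1(\DD_n)$ of dimension $m-1$, supported on the hyperplanes through $X$. Every rank two flat of $\DD_n$ has multiplicity $2$ or $3$, so the only local components are the $4\binom{n}{3}$ two-dimensional ones attached to the triple points: these are precisely the $U^q_{i,j,k}$ coming from the $\A_3$ subarrangements $\{H^-_{i,j},H^-_{i,k},H^-_{j,k}\}$ and their three sign variants.

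Next I would show every other component is essential and two-dimensional, hence comes from a $(3,d)$-multinet on a subarrangement $\A'\subseteq\DD_n$. Indeed, a $(k,d)$-multinet gives a component of dimension $k-1$, and by multinet axioms (ii) and (iii) each flat of its base locus lies on at least one hyperplane from every block, so has multiplicity $\ge k$; since $\DD_n$ has no rank two flat of multiplicity $>3$, it supports no $(k,d)$-multinet with $k\ge 4$. The heart of the proof is then the classification of the subarrangements $\A'$ of $\DD_n$ carrying an essential $3$-multinet. The key observation is that each base-locus flat has multiplicity exactly $3$, and every multiplicity-$3$ flat of $\DD_n$ involves exactly three coordinates; hence any two hyperplanes of $\A'$ lying in distinct blocks share a coordinate. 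Regarding the hyperplanes $H^{\pm}_{a,b}$ as signed edges of a graph on the coordinate set, this forces disjoint edges into a common block, and an analysis of the resulting ``disjointness graph'' shows it is connected once five or more coordinates occur --- leaving only the trivial partition and hence no multinet. So $\A'$ involves at most four coordinates, and a finite check (tracking which sign patterns are permitted by the list of rank two flats) identifies the essential $3$-multinet subarrangements as exactly: the $\A_4$'s $\{H^\pm_{i,j},H^\pm_{i,k},H^\pm_{j,k}\}$ on three coordinates, yielding the components $V_{i,j,k}$; the eight $\A_4$'s on four coordinates listed above, yielding the $V^q_{i,j,k,l}$; and the $\DD_4$'s, yielding the $W_{i,j,k,l}$. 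That each of these does support such a component, of the asserted shape, is provided by Falk \cite{Fa} in the braid case and by Pereira--Yuzvinsky \cite{PY} in the type D case.

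It then remains to observe that the four families are pairwise distinct --- they are carried by distinct subarrangements, and an essential component is never local --- and that each component has dimension two. Summing the counts $4\binom{n}{3}$, $\binom{n}{3}$, $8\binom{n}{4}$, $\binom{n}{4}$ gives $5\binom{n}{3}+9\binom{n}{4}$ two-dimensional components, which is the stated decomposition of $\RR^1(\DD_n)$.

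I expect the main obstacle to be the case analysis once one is reduced to four coordinates: one must comb through the subarrangements of $\DD_4$ and decide exactly which admit an essential $3$-net, matching the outcome to the families $U$, $V$, $W$ without omission or duplication. This is where the explicit list of rank two flats of $\DD_n$ and the sign bookkeeping have to be handled with care.
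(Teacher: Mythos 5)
Your overall strategy closely mirrors the paper's: describe the $2$-dimensional components via multinets (equivalently, neighborly partitions), rule out components carried by subarrangements involving five or more coordinates, then do a finite case check on subarrangements of $\DD_4$. The paper phrases the exclusion step in terms of nontrivial neighborly partitions and the transverse-pair relation $\pitchfork$; your multinet framing is the same argument in slightly different vocabulary, and the local-vs-essential split is already implicit in the paper's list of $\A_3$, $\A_4$, and $\DD_4$ subarrangements.

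There is, however, a genuine gap in your reduction to four coordinates. The claim that ``the disjointness graph is connected once five or more coordinates occur'' is false as stated: take $\A' = \{H^-_{1,2},\,H^-_{1,3},\,H^-_{2,3},\,H^-_{1,4},\,H^-_{1,5}\}\subset\DD_5$. This subarrangement involves five coordinates, yet its disjointness graph (vertices the hyperplanes, edges joining coordinate-disjoint pairs) splits into three components $\{H^-_{1,4},H^-_{1,5},H^-_{2,3}\}$, $\{H^-_{1,2}\}$, $\{H^-_{1,3}\}$, which \emph{a priori} is compatible with a three-block partition. The ``two hyperplanes in distinct blocks share a coordinate'' observation alone does not finish the argument. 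What actually rules out such $\A'$ is the further multinet constraint: any two hyperplanes lying in distinct blocks intersect in a base-locus flat, and that flat's third hyperplane must belong to $\A'$ and to the remaining block; iterating this, together with the coordinate-sharing observation, produces a contradiction whenever $\geq 5$ coordinates occur. This is precisely the cascade the paper performs in the case $|\{i,j,k,l,r,s\}|=5$, via the transversality relation $\pitchfork$ and the neighborliness (monochromaticity) of triple-point flats. You should replace the connectivity claim with that argument; the remainder of your outline, including the final $\DD_4$ case check and the count $4\binom{n}{3}+\binom{n}{3}+8\binom{n}{4}+\binom{n}{4}=5\binom{n}{3}+9\binom{n}{4}$, matches the paper.
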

\begin{proof}
The inclusion of the union in $\RR^1(\DD_n)$ follows from the preceding discussion, so it suffices to establish the opposite inclusion.  For this, it is enough to show that a subarrangement of $\DD_n$ not isomorphic to $\A_3$, $\A_4$, or $\DD_4$ does not contribute a component to $\RR^1(\DD_n)$.  Let $\BB$ be such a subarrangement.  We show that $\BB$ does not admit a nontrivial neighborly partition.

If $\BB\subset \DD_n$ is a subarrangement of cardinality at most $3$ that is not isomorphic to $\A_3$, then $\BB$ is in general position, and admits no nontrivial neighborly partition.  So we may assume that $|\BB|\ge 4$.

Let $\Pi$ be a neighborly partition of $\BB$.  If $\BB$ contains $3$ hyperplanes $H_{i,j}^a,H_{k,l}^b,H_{r,s}^c$, where $a,b,c\in\{+,-\}$, with $|\{i,j,k,l,r,s\}|\ge 5$, we assert that $\Pi$ must be trivial.  
If $|\{i,j,k,l,r,s\}|=6$, then the hyperplanes $H_{i,j}^a,H_{k,l}^b,H_{r,s}^c$ are in general position, so must lie in the same block, say $\Pi_0$, of $\Pi$.  Let $H_{p,q}^d$ be any other hyperplane in $\BB$, where $d\in\{+,-\}$. Then $H_{p,q}^d$ must be in general position with at least one of 
$H_{i,j}^a,H_{k,l}^b,H_{r,s}^c$, which implies that $H_{p,q}^d \in\Pi_0$.  Thus, $\Pi=\Pi_0$ is trivial.

If $H$ and $H'$ are hyperplanes of an arrangement $\BB$ such that the rank two flat $H \cap H'$ is not contained in any other hyperplane $H''$ of $\BB$, i.e., $\codim H\cap H'\cap H''>2$, we will write $H \pitchfork H'$ in $\BB$.

If $|\{i,j,k,l,r,s\}|= 5$, permuting indices if necessary, we can assume that $\{i,j\}=\{1,2\}$, $\{k,l\}=\{3,4\}$, and $\{r,s\}=\{4,5\}$.  Let $\Pi_0$ be the block of $\Pi$ containing $H_{1,2}^a$.  Then, $H_{1,2}^a \pitchfork H_{3,4}^b$ and $H_{1,2}^a\pitchfork H_{4,5}^c$ in $\BB$, which implies that $H_{3,4}^b,H_{4,5}^c\in \Pi_0$.

Let $H_{p,q}^d$ be any other hyperplane in $\BB$.  We must show that $H_{p,q}^d\in\Pi_0$.  If $\{p,q\}=\{1,2\}$ or $3\le p$, then $H_{1,2}^a \pitchfork H_{p,q}^d$ is a rank $2$ flat of $\BB$, which implies that $H_{p,q}^d\in\Pi_0$.  
So we may assume that $p\in\{1,2\}$ and $q\ge 3$.  If $q\ge 5$, then $H_{3,4}^b\pitchfork H_{p,q}^d$ in $\BB\implies H_{p,q}^d\in \Pi_0$.  Similarly, if $q=3$, then $H_{4,5}^c\pitchfork H_{p,q}^d$ in $\BB \implies H_{p,q}^d\in \Pi_0$. 
It remains to consider the instance $p\in\{1,2\}$ and $q=4$.

There is a multiplicity $3$, rank $2$ flat $H_{p,3}^e \cap H_{p,4}^d \cap H_{3,4}^b \in L_2(\DD_n)$, for some $e\in\{+,-\}$.  Note that $H_{p,3}^e \pitchfork H_{4,5}^b$ in $\DD_n$.  If $H_{p,3}^e \notin\BB$, then $H_{p,4}^d \pitchfork H_{3,4}^b$ in $\BB \implies H_{p,4}^d\in \Pi_0$.  If $H_{p,3}^e \in\BB$, then $H_{p,3}^e \pitchfork H_{4,5}^b$ in $\BB\implies H_{p,3}^d\in \Pi_0$.  In this last instance, the flat $H_{p,3}^e \cap H_{p,4}^d \cap H_{3,4}^b \in L_2(\BB)$ must be monochrome since $\Pi$ is neighborly, which forces $H_{p,4}^d\in \Pi_0$.

We are left with the case where any triple of hyperplanes $H_{i,j}^a,H_{k,l}^b,H_{r,s}^c$ in $\BB$ satisfies $|\{i,j,k,l,r,s\}|\le 4$, which implies that 
all hyperplanes of $\BB \subset\DD_n$ involve at most $4$ indices.  Thus, $\BB$ is a proper subarrangement of $\DD_4$ (with $|\BB|\ge 4$).  If $\BB \subsetneq \DD_3 \cong \A_4$ (for any of the $9$ choices of $\A_4$ subarrangements of $\DD_4$), it is readily checked that $\BB$ admits no
 nontrivial neighborly partition. So we may assume that $\A_4 \subsetneq \BB \subsetneq \DD_4$, for some choice of $\A_4 \subset \DD_4$.
 
For such $\BB$, there are pairs of indices $i<j$ and $k<l$ so that $|\{H_{i,j}^\pm\}\cap \BB|=1$ and $|\{H_{k,l}^\pm\}\cap \BB|=2$. If $k<l$ is the only pair of indices with $|\{H_{k,l}^\pm\}\cap \BB|=2$, then $\BB=\A_4 \cup \{H_{k,l}^\pm\}\subsetneq \DD_4$ is an arrangement of $7$ hyperplanes containing a copy of $\A_4$.  Checking that the hyperplane $H_{k,l}^b \in \BB\smallsetminus\A_4$ is transverse to $\A_4$, we see that $\BB$ admits no nontrivial neighborly partition.

Consequently, we may assume that there is more than one pair of indices $k<l$ with $|\{H_{k,l}^\pm\}\cap \BB|=2$.  At least one such a pair satisfies $|\{i,j\}\cap \{k,l\}|=1$.  Write $H_{i,j}^a \in \BB$ and $H_{i,j}^{\bar{a}}\notin\BB$, and assume that $\{H_{j,k}^a,H_{j,k}^{\bar{a}}\}\subset\BB$, where $\{a,\bar{a}\}=\{+,-\}$. (The other cases are similar, and left to the reader.)  As before, let $\Pi_0$ be the block of $\Pi$ containing $H_{i,j}^a$.  

Suppose $a=+$.  Since $\A_4 \subsetneq\BB$, at least one of $H_{i,k}^+,H_{i,k}^-$ is in $\BB$. In $L_2(\DD_n)$ there are rank $2$ flats
\[
H_{i,j}^+\cap H_{i,k}^+ \cap H_{j,k}^-,\ H_{i,j}^+\cap H_{i,k}^- \cap H_{j,k}^+,\ 
H_{i,j}^-\cap H_{i,k}^- \cap H_{j,k}^-,\ H_{i,j}^-\cap H_{i,k}^+ \cap H_{j,k}^+.
\]
Since $H_{i,j}^-\notin\BB$, the last two of these yield flats of multiplicity $2$ in $L_2(\BB)$ (or nothing in $L_2(\BB)$).  Consequently, if $H_{i,k}^b \in \BB$, then $H_{i,k}^b,H_{j,k}^+,H_{j,k}^-$ all lie in the same block of $\Pi$.  Then $H_{i,j}^+\cap H_{i,k}^b\cap H_{j,k}^{\bar{b}}\in L_2(\BB)$, where $\{b,\bar{b}\}=\{+,-\}$, and this flat must be monochrome.  So $H_{i,j}^+,H_{i,k}^b,H_{j,k}^+,H_{j,k}^-\in\Pi_0$.  From this, it follows easily that $\Pi_0=\Pi$, and hence $\Pi$ is trivial.

A similar argument shows that $\Pi$ is trivial if $a=-$, completing the proof.
\end{proof}
\subsection{Resonance of the Coxeter arrangement of type B}
Let $\BB_n$ be the type B Coxeter arrangement in $\C^n$, consisting of the $n^2$ hyperplanes $H_i^{}=\ker(x_i)$, $1\le i\le n$, and $H_{i,j}^{\pm}=\ker(x_i\pm x_j)$, $1\le i<j\le n$.  
The rank $2$ flats of $\BB_n$ are
\begin{equation*} \label{eq:B2flats}
\begin{array}{llll}
H_{i,j}^-\cap H_{i,k}^-\cap H_{j,k}^-,&
H_{i,j}^-\cap H_{i,k}^+\cap H_{j,k}^+,& H^{}_i \cap H_{j,k}^{\pm},&
H_p^{} \cap H_q^{} \cap H_{p,q}^-\cap H_{p,q}^+,\\[5pt]
H_{i,j}^+\cap H_{i,k}^-\cap H_{j,k}^+,&
H_{i,j}^+\cap H_{i,k}^+\cap H_{j,k}^-,& H^{}_j \cap H_{i,k}^{\pm},&
 H^{}_k \cap H_{i,j}^{\pm},\quad H_{p,q}^\pm \cap H_{r,s}^\pm,
\end{array}
\end{equation*}
where $1\le i < j < k \le n$, $1\le p < q \le n$, $1\le r<s\le n$, and $\{p,q\}\cap\{r,s\}=\emptyset$.
Note that $\BB_n$ has $\binom{n}{2}$ rank two flats of multiplicity $4$, $4\binom{n}{3}$ rank two flats of multiplicity $3$, and $6\binom{n}{3}+12\binom{n}{4}$ rank two flats of multiplicity $2$.
We determine the structure of the 
variety $\RR^1(\BB_n) \subset \k^{n^2}$.

Since $\DD_n \subset \BB_n$, there is an inclusion $\RR^1(\DD_n) \subset \RR^1(\BB_n)$.  
As noted previously, $\A_3$ and $\A_4$ subarrangements give rise to two-dimensional components of the first resonance variety.  It is also known that $\BB_2$ and $\BB_3$ subarrangements yield resonance components, of dimensions $3$ and $2$ respectively, see \cite{Fa}.  These facts yield a number of components of $\RR^1(\BB_n)$, which we now specify.  

Each 2-tuple $1\le i<j\le n$ yields a subarrangement $\BB_2(i,j)$ of $\BB_n$, isomorphic to $\BB_2$, and a corresponding rank two flat $H_i^{} \cap H_j^{} \cap H_{i,j}^-\cap H_{i,j}^+$.  
Let $L_{i,j}$ be the three-dimensional component of $\RR^1(\BB_n)$ corresponding to this flat (resp., to $\BB_2(i,j)$).

Each 3-tuple $1\le i<j<k\le n$ determines a subarrangement $\BB_3(i,j,k)$ of $\BB_n$, defined by $x_i^{}x_j^{}x_k^{}(x_i^2-x_j^2)(x_i^2-x_k^2)(x_j^2-x_k^2)$, which is isomorphic to $\BB_3$.  This subarrangement yields $12$ two-dimensional components of $\RR^1(\BB_n)$, $11$ corresponding to $\A_4$ subarrangements of $\BB_3(i,j,k)$, and $1$ component corresponding to $\BB_3(i,j,k)$ itself.  The $\A_4$ subarrangements of $\BB_3(i,j,k)$ are
\[
\begin{array}{rlrl}
1&\{H_i^{},H_j^{},H_k^{},H_{i,j}^-,H_{i,k}^-,H_{j,k}^-\},&2&\{H_i^{},H_j^{},H_k^{},H_{i,j}^-,H_{i,k}^+,H_{j,k}^+\},\\
3&\{H_i^{},H_j^{},H_k^{},H_{i,j}^+,H_{i,k}^-,H_{j,k}^+\},&4&\{H_i^{},H_j^{},H_k^{},H_{i,j}^+,H_{i,k}^+,H_{j,k}^-\},\\
5&\{H_i^{},H_{i,j}^-,H_{i,j}^+,H_{i,k}^-,H_{i,k}^+,H_{j,k}^-\},&6&\{H_i^{},H_{i,j}^-,H_{i,j}^+,H_{i,k}^-,H_{i,k}^+,H_{j,k}^+\},\\
7&\{H_j^{},H_{i,j}^-,H_{i,j}^+,H_{i,k}^-,H_{j,k}^-,H_{j,k}^+\},&8&\{H_j^{},H_{i,j}^-,H_{i,j}^+,H_{i,k}^+,H_{j,k}^-,H_{j,k}^+\},\\
9&\{H_k^{},H_{i,j}^-,H_{i,k}^-,H_{i,k}^+,H_{j,k}^-,H_{j,k}^+\},\quad&10&\{H_k^{},H_{i,j}^+,H_{i,k}^-,H_{i,k}^+,H_{j,k}^-,H_{j,k}^+\},\\
11&\{H_{i,j}^-,H_{i,j}^+,H_{i,k}^-,H_{i,k}^+,H_{j,k}^-,H_{j,k}^+\}.
\end{array}
\]
Note that only the last of these is contained in $\DD_n$.  For the $10$ other $\A_4$ subarrangements of $\BB_3(i,j,k)$, let $Y_{i,j,k}^{q}$, $1\le q \le 10$, be the 
corresponding components of $\RR^1(\BB_n)$.  
Let $Z_{i,j,k}$ be the component of  $\RR^1(\BB_n)$ corresponding to $\BB_3(i,j,k)$ itself.  

\begin{thm} \label{thm:Bres}
The first resonance variety of the arrangement $\BB_n$ is given by
\[
\RR^1(\BB_n) = \RR^1(\DD_n) \cup \bigcup_{i<j} L_{i,j} \cup \bigcup_{i<j<k}\Bigl(Z_{i,j,k} \cup \bigcup_{q=1}^{10} Y_{i,j,k}^q\Bigr).
\]
Hence, $\RR^1(\BB_n)\subset \k^{n^2}$ is a union of  $16\binom{n}{3}+9\binom{n}{4}$ two-dimensional components and $\binom{n}{2}$ three-dimensional components. 
\end{thm}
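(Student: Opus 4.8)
The plan is to follow closely the proof of Theorem~\ref{thm:Dres}. The containment of the displayed union in $\RR^1(\BB_n)$ is immediate: the inclusion $\DD_n\subset\BB_n$ gives $\RR^1(\DD_n)\subset\RR^1(\BB_n)$, while the subspaces $L_{i,j}$, $Z_{i,j,k}$, and $Y_{i,j,k}^q$ were already identified in the discussion preceding the statement as components arising, respectively, from the subarrangements $\BB_2(i,j)$, $\BB_3(i,j,k)$, and the ten non-$\DD_n$ copies of $\A_4$ inside $\BB_3(i,j,k)$. For the reverse containment, recall (Falk \cite{Fa}; see also \cite{CScv,LY,FY} and Section~\ref{sec:arr}) that every irreducible component of $\RR^1(\BB_n)$ has the form $L_\Pi$ for a nontrivial neighborly partition $\Pi$ of some subarrangement $\BB\subseteq\BB_n$. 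So it suffices to show that, for any such pair $(\BB,\Pi)$, the subspace $L_\Pi$ appears on the right-hand side.

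First I would dispose of the case $\BB\subseteq\DD_n$: then the argument in the proof of Theorem~\ref{thm:Dres} applies verbatim, so $\BB$ is lattice-isomorphic to $\A_3$, $\A_4$, or $\DD_4$, and $L_\Pi$ is one of the components constituting $\RR^1(\DD_n)$. So assume $\BB$ contains a coordinate hyperplane $H_i=\ker(x_i)$. A key observation here is that the only rank-two flats of $\BB_n$ of multiplicity $\ge 3$ lying on a coordinate hyperplane are the multiplicity-four flats $H_p\cap H_q\cap H_{p,q}^+\cap H_{p,q}^-$; hence any pencil of three or more hyperplanes of $\BB$ through a point on a coordinate hyperplane is contained in some $\BB_2(p,q)$, and its partition yields a subspace of $L_{p,q}$. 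The main step is then to show, using the transversality bookkeeping of Theorem~\ref{thm:Dres} (write $H\pitchfork H'$ in $\BB$ when $H\cap H'$ lies on no further hyperplane of $\BB$), that if $\BB$ contains $H_i$ and involves four or more indices then the neighborly partition $\Pi$ must be trivial. The mechanism is the same as in the $\DD_n$ case: since $H_i\pitchfork H_{j,k}^{\pm}$ in $\BB_n$ whenever $i\notin\{j,k\}$, while $H_i\pitchfork H_j$ and $H_i\pitchfork H_{i,k}^{\pm}$ fail only on the multiplicity-four flats, one forces hyperplanes supported on superfluous indices into the block of $H_i$ and concludes triviality. Consequently $\BB$ involves at most three indices, i.e.\ $\BB\subseteq\BB_3(i,j,k)$ for some $i<j<k$ (or $\BB\subseteq\BB_2(i,j)$).

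It then remains to run through the subarrangements of $\BB_3$ admitting a nontrivial neighborly partition. These are: $\BB_3$ itself (giving $Z_{i,j,k}$); its eleven $\A_4$ subarrangements (the one inside $\DD_n$ gives a component already in $\RR^1(\DD_n)$, the other ten give the $Y_{i,j,k}^q$); its $\A_3$ subarrangements (pencils of three lines — those missing every coordinate hyperplane contribute $\A_3$-components already counted in $\RR^1(\DD_n)$, the others are contained in the pencils $\BB_2(p,q)$); and its three $\BB_2$ subarrangements (pencils of four lines, giving the $L_{i,j}$). This establishes the equality of varieties. For the count: by Theorem~\ref{thm:Dres}, $\RR^1(\DD_n)$ has $5\binom{n}{3}+9\binom{n}{4}$ components, all two-dimensional; the $\binom{n}{2}$ subspaces $L_{i,j}$ are three-dimensional; the $Z_{i,j,k}$ give $\binom{n}{3}$, and the $Y_{i,j,k}^q$ give $10\binom{n}{3}$, two-dimensional components. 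Since the components of $\RR^1(\BB_n)$ are projectively disjoint (Libgober--Yuzvinsky \cite{LY}), the listed subspaces — each a genuine component, as noted before the statement — are pairwise distinct, so summing yields $\binom{n}{2}$ three-dimensional components and $5\binom{n}{3}+9\binom{n}{4}+\binom{n}{3}+10\binom{n}{3}=16\binom{n}{3}+9\binom{n}{4}$ two-dimensional components.

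The main obstacle is the reduction in the second paragraph: proving that a subarrangement of $\BB_n$ containing a coordinate hyperplane and carrying a nontrivial neighborly partition must be supported on at most three indices, together with the ensuing enumeration of neighborly partitions of $\BB_3$. This is a longer and more delicate version of the transversality analysis in the proof of Theorem~\ref{thm:Dres}, because the coordinate hyperplanes and the multiplicity-four flats create several additional configurations (e.g.\ subarrangements strictly between $\A_4$ and $\BB_3$, or supported on four indices but containing some $H_i$) that must be ruled out case by case.
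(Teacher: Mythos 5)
Your outline matches the structure of the paper's argument: the containment of the displayed union in $\RR^1(\BB_n)$ is indeed immediate from the discussion preceding the statement; for the reverse containment one passes through neighborly partitions; the case $\BB\subseteq\DD_n$ is disposed of by Theorem~\ref{thm:Dres}; and the remaining work is a transversality analysis for subarrangements containing a coordinate hyperplane. Your observation that the only rank-two flats of $\BB_n$ of multiplicity $\ge 3$ supported on a coordinate hyperplane are the flats $H_p\cap H_q\cap H_{p,q}^+\cap H_{p,q}^-$ is correct and is exactly the leverage the paper uses.

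But you do not actually prove the central claim, and you say so: the passage ``one forces hyperplanes supported on superfluous indices into the block of $H_i$ and concludes triviality'' followed by ``the main obstacle is the reduction in the second paragraph'' is an acknowledgment that the decisive step is missing. That step is precisely what the paper's proof of Theorem~\ref{thm:Bres} consists of. Concretely, the paper first rules out the situation where the non-coordinate part $\A''$ of $\A$ spans five or more indices, then the case of a coordinate hyperplane $H_m$ with $m$ outside the support of $\A''$, thereby reducing to a subarrangement of a $\BB_4$. It then runs a four-way dichotomy on the number $|\A'|$ of coordinate hyperplanes present ($|\A'|\in\{1,2,3,4\}$); in each branch one combines the transversalities $H_i\pitchfork H_{j,k}^{\pm}$ (for $i\notin\{j,k\}$) with the forced monochromaticity of any multiplicity-four flat $H_p\cap H_q\cap H_{p,q}^+\cap H_{p,q}^-$ meeting $\A$ to sweep every hyperplane into one block. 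None of that follows from ``the mechanism is the same as in the $\DD_n$ case''; the $\DD_n$ proof never has to contend with multiplicity-four flats or coordinate hyperplanes, and the branching genuinely depends on which subsets of $\{H_i,H_j,H_{i,j}^{+},H_{i,j}^{-}\}$ lie in $\A$. Likewise your closing enumeration of subarrangements of $\BB_3$ admitting nontrivial neighborly partitions is asserted, not verified (in particular the subarrangements strictly between $\A_4$ and $\BB_3$ that you mention as a worry must be checked). As it stands this is an accurate road map of the paper's proof, not a proof: you need to carry out the case analysis that you have correctly identified as the obstacle.
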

\begin{proof}
The inclusion of the union in $\RR^1(\BB_n)$ follows from the preceding discussion, so it suffices to establish the opposite inclusion.  For this, it is enough to show that a subarrangement of $\BB_n$ not isomorphic to $\A_3$, $\A_4$, $\BB_2$, $\BB_3$, or $\DD_4$ does not contribute a component to $\RR^1(\BB_n)$.  Let $\A$ be such a subarrangement.  We show that $\A$ does not admit a nontrivial neighborly partition.  

In light of Theorem \ref{thm:Dres}, we may assume that $\A$ is not contained in any $\DD_k$ subarrangement of $\BB_n$ for $k \le n$.  Thus, $\A=\A' \cup \A''$, where $\A' \subset \{H_i^{}\}$ is nonempty, and $\A''\subset\{H_{i,j}^{\pm}\}$.  If $|\A''|\le 2$, it is readily checked that $\A\not\cong\A_3,\BB_2$ admits no nontrivial neighborly partition.

Suppose $\A'' \supset\{H_{i,j}^a,H_{k,l}^b,H_{r,s}^c\}$.  If $|\{i,j,k,l,r,s\}|\ge 5$, these three hyperplanes must lie in the same block $\Pi_0$ of a neighborly partition $\Pi$ of $\A$.  For each hyperplane $H_p^{}\in\A'\subset\A$, one of these three and $H_p$ forms a rank two flat of $\A$ of multiplicity two, which implies that $H_p^{}\in\Pi_0$ as well.  Then, arguing as in the proof of Theorem \ref{thm:Dres} reveals that $\Pi$ is trivial.

We are left with the case where $\A=\A'\cup \A''$, $|\A'|\ge 1$, $|\A''|\ge 3$, and all hyperplanes of $\A'' \subset\DD_n$ involve at most $4$ indices, say $\{i,j,k,l\}$.  Assume first that $H^{}_m\in\A'$ for some $m\notin\{i,j,k,l\}$. Then  $L_2(\A)$ contains the flats $H^{}_m \cap H_{r,s}^a$ for each $\{r,s\}\subset\{i,j,k,l\}$ for which $H_{r,s}^a \in \A''$. If $\Pi$ is a neighborly partition of $\A$, it follows that $H_m^{}$ and $H_{r,s}^a$ lie in the same  block $\Pi_0$ of $\Pi$ for all such $\{r,s\}$.  If $H_t^{}\in\A'$ for $t \in\{i,j,k,l\}$, then since $m \notin\{i,j,k,l\}$, $H_m^{} \cap H_t^{}$ is a multiplicity two flat of $\A$, which implies that $H_t^{}\in\Pi_0$ as well and $\Pi$ is trivial.

Consequently, we can assume that all hyperplanes of $\A=\A'\cup\A''$ involve only the indices $\{i,j,k,l\}$, so $\A$ is a subarrangement of the $\BB_4$ arrangement involving these indices. 
We consider the various possibilities for $|\A'|$.

If $|\A'|=4$, then $H^{}_i,H^{}_j,H^{}_k,H^{}_l\in\A$.  Suppose, without loss, that $H_{k,l}^a\in \A$.  Then $H^{}_i\pitchfork H_{k,l}^a$ and $H^{}_j\pitchfork H_{k,l}^a$ are rank two flats of $\A$, so $H^{}_i,H^{}_j,H^a_{k,l}$ must lie in the same block $\Pi_0$ of a neighborly partition $\Pi$ of $\A$.  If $H^b_{i,q}$ or $H^c_{j,q}$ are in $\A$, for $q\in\{k,l\}$, then $H^{}_j\pitchfork H_{i,q}^b, H^{}_i\pitchfork H_{i,q}^c \in L_2(\A)$, which implies $H^b_{i,q}, H^c_{j,q} \in\Pi_0$.  The rank two flat $H^{}_i \cap H^{}_q \cap H^-_{i,q} \cap H^+_{i,q} \in L_2(\BB_n)$  yields a rank two flat in $\A$.  Since $H_i^{},H^-_{i,q},H^+_{i,q} \in \Pi_0$ (if either of the latter two hyperplanes are in $\A$), this flat in $\A$ must be monochrome.  Hence, $H_k^{},H_l^{} \in \Pi_0$, and $\Pi$ is trivial.

If $|\A'|=3$, we can assume that $H^{}_i,H^{}_j,H^{}_k, H_{k,l}^a\in \A$ and $H_l^{}\notin\A$.  As above, $H^{}_i,H^{}_j,H^a_{k,l}\in\Pi_0$ must lie in the same block of a neighborly partition $\Pi$ of $\A$.  Using rank two flats of $\A$ of multiplicity two, as in the previous case, any hyperplane $H^b_{r,s} \in \A$ must also lie in $\Pi_0$.  Since $H_l^{} \notin \A$, the flat $H^{}_k \cap H^{}_l \cap H^-_{k,l} \cap H^+_{k,l} \in L_2(\BB_n)$ yields a flat of multiplicity two or three in $\A$, which must be monochrome.  
Hence, $H_k^{}\in \Pi_0$, and $\Pi$ is trivial.

If $|\A'|=2$, assume that $H_i^{},H_j^{}\in\A$ and $H_k^{},H_l^{}\notin\A$.  If $H^a_{k,l} \in \A$, then 
$H^{}_i,H^{}_j,H^a_{k,l}\in\Pi_0$ must lie in the same block of a neighborly partition $\Pi$ of $\A$, and one can show that $\Pi$ must be trivial by considering multiplicity two rank two flats of $\A$ as above.  So assume that $\{H^\pm_{k,l}\}\cap \A=\emptyset$.  Since the hyperplanes of $\A$ involve all four indices $i,j,k,l$, there are hyperplanes $H_{p,k}^a,H_{q,l}^b \in \A$ with $\{p,q\}=\{i,j\}$.  There is a corresponding multiplicity two rank two flat $H_{p,k}^a\pitchfork H_{q,l}^b \in L_2(\A)$ (as $H_{k,l}^c\notin\A)$.  So $H_{p,k}^a,H_{q,l}^b\in\Pi_0$  must lie in the same block of a neighborly partition $\Pi$ of $\A$.  Since $H^{}_q \pitchfork H_{p,k}^a,H^{}_p\pitchfork H_{q,l}^b \in L_2(\A)$, we have $H_i^{},H_j^{}\in\Pi_0$, and  it follows that $\Pi$ must be trivial.

If $|\A'|=1$, assume that $H_i^{}\in\A$ and $H_j^{},H_k^{},H_l^{}\notin\A$.  If all hyperplanes of $\A$ involve index $i$, then there are $a,b,c\in\{+,-\}$ so that $H_{i,j}^a,H_{i,k}^b,H_{i,l}^c\in\A$.  These hyperplanes are in general position in $\A$, so lie in the same block $\Pi_0$ of a neighborly partition $\Pi$ of $\A$.  Write, for instance, $\{a,\bar{a}\}=\{+,-\}$.  If $H_{i,j}^{\bar{a}}\in\A$, 
then $H_{i,j}^{\bar{a}} \in \Pi_0$ since  $H_{i,j}^{\bar{a}}\pitchfork H_{i,k}^b \in L_2(\A)$. 
Similarly, 
$H_{i,k}^{\bar{b}},H_{i,l}^{\bar{c}} \in \Pi_0$ if either of these hyperplanes is in $\A$. 
Since $H_j\notin\A$, the flat $H_i^{} \cap H_j^{} \cap H_{i,j}^a \cap H_{i,j}^b \in L_2(\BB_n)$ yields a multiplicity two or three flat of $\A$, which must be monochrome. So $H_i^{}\in \Pi_0$, and $\Pi$ is trivial.

Finally, if $|\A'|=1$ and $\A$ has a hyperplane which does not involve the index $i$, 
we can assume that $H^a_{j,k},H^b_{p,l} \in \A$, where $p\in\{i,j,k\}$.  Then $H_i \pitchfork H_{j,k}^a \in L_2(\A)$, so $H_i,H^a_{j,k} \in\Pi_0$ lie in the same block of a neighborly partition $\Pi$ of $\A$. If $p\neq i$, then $H_i \pitchfork H_{p,l}^b \in L_2(\A)$, while if $p=i$, then $H^a_{j,k} \pitchfork H_{p,l}^b \in L_2(\A)$.  So $H^b_{p,l} \in \Pi_0$ as well.  If $H_{r,s}^c \in \A$ and $i \neq r$, then $H_i \pitchfork H_{r,s}^c \in L_2(\A)$, and $H_{r,s}^c \in \Pi_0$.  If $H_{i,l}^c \in \A$, then $H^a_{j,k} \pitchfork H_{i,l}^c \in L_2(\A)$, and $H_{i,l}^c\in\Pi_0$.  Suppose $H_{i,s}^c \in \A$ for $s\in\{j,k\}$.  If $\{i,s\} \cup \{p,l\}=\{i,j,k,l\}$, then  
$H^c_{i,s} \pitchfork H_{p,l}^b \in L_2(\A)$, and $H_{i,s}^c\in\Pi_0$.  Otherwise, we have either $p=i$ or $p=s$.  If $p=i$, there is a flat $H_{i,s}^c \cap H^b_{p,l} \cap H_{s,l}^d \in L_2(\BB_n)$.  If $H_{s,l}^d \in \A$, then $H_{s,l}^d \in \Pi_0$ since $i\neq s$.  Consequently, this flat yields a flat of multiplicity two or three in $\A$, which must be monochrome, so $H_{i,s}^c \in \Pi_0$.  If $p=s$, 
there is a flat $H_{i,s}^c \cap H^b_{p,l} \cap H_{i,l}^d \in L_2(\BB_n)$.  If $H_{i,l}^d \in \A$, then $H_{i,l}^d \in \Pi_0$ since $H^a_{j,k} \pitchfork H_{i,l}^d \in L_2(\A)$.  Consequently, this flat yields a flat of multiplicity two or three in $\A$, which must be monochrome, so $H_{i,s}^c \in \Pi_0$.  Thus, $\Pi$ is trivial.
\end{proof}

\begin{proof}[Proof of Theorem \ref{thm:coxeter}] 
See Example \ref{ex:braid} for the type A pure braid group. 
Let $PB_n=\pi_1(M(\BB_n))$ and $PD_n=\pi_1(M(\DD_n))$ be the type B and D pure braid groups. 
The resolution of the Chen ranks conjecture and the determination of the resonance varieties of the arrangements $\BB_n$ and $\DD_n$ yield the Chen ranks of these groups. The remaining portions of Theorem \ref{thm:coxeter} are immediate consequences of Theorem \ref{thm:main}, Theorem \ref{thm:Ared}, Theorem \ref{thm:Dres}, and Theorem \ref{thm:Bres}.
\end{proof}

\begin{ack}   Calculations with 
Macaulay~2 \cite{GS} were essential to our work. Our collaboration began
at Hiroaki Terao's 60th birthday conference at the Pacific Institute for the Mathematical Sciences. Parts of this work were carried out when the first author visited the University of Illinois, Urbana-Champaign (Spring 2012), and when both authors visited the Mathematisches Forschungsinstitut Oberwolfach (Fall 2012).  We thank PIMS, UIUC, and the MFO for their support and hospitality, and for providing productive mathematical environments.
\end{ack}

\bibliographystyle{amsalpha}

\end{document}